\newtheorem{theo}{Theorem}[section]
\newtheorem{prop}[theo]{Proposition}
\newtheorem{lemm}[theo]{Lemma}
\newtheorem{rema}[theo]{Remark}
\newtheorem{sublemma}[theo]{Sublemma}
\title{Hyper-K\"ahler compactification of  the intermediate Jacobian fibration of a cubic fourfold : the twisted case}
\author{Claire Voisin}
\date{}
\newfont{\gothic}{eufb10}
\begin{document}
\maketitle
\setcounter{section}{-1}

\begin{abstract}  The starting point of this note  is
 our recent paper with Laza and Sacc\`a on the construction of deformations of O'Grady's $10$-dimensional manifolds as compactifications of intermediate Jacobian fibrations
 associated to cubic fourfolds. The note  provides   a complement to that paper consisting in the analogous construction
in the twisted case, leading to isogenous but presumably not isomorphic or birational hyper-K\"ahler manifolds.
 \end{abstract}
\section{Introduction}

Hyper-K\"ahler geometry is a geometry of a very restricted  type which is part of
the more general setting of $K$-trivial compact K\"ahler geometry.
The existence of hyper-K\"ahler manifolds rests
on Yau's theorem \cite{yau}. Hyper-K\"ahler manifolds are complex
manifolds of even complex dimension $2n$ with a Ricci-flat K\"ahler metric and parallel everywhere nondegenerate
holomorphic $2$-form. If one forgets about the metric, the complex manifolds one obtains
can always be deformed to projective complex manifolds.  Hodge theory plays  a major role in the deformation theory of these complex manifolds. In fact they are not only locally but also globally
determined determined by their period point (see \cite{beauville}, \cite{verbi}), namely the de Rham cohomology class of the closed holomorphic $2$-form.
It is also remarkable that studying the period map for these manifolds
led Beauville and Bogomolov to the discovery of the so-called
Beauville-Bogomolov quadratic form, whose existence  is their most striking topological property.
The situation concerning the construction and classification of
deformation types of hyper-K\"ahler manifolds is very strange:
Two infinite series are known (see \cite{beauville}), each one having one type
for  each even dimension, and furthermore two sporadic (families of) examples in dimension $6$ and $10$ were constructed by O'Grady (\cite{OG1}, \cite{OG2}). Another strange feature of the theory is the following: the simplest kyper-K\"ahler manifolds are
$K3$ surfaces, with particular examples constructed as Kummer surfaces, hence associated with
abelian surfaces or $2$-dimensional complex tori. There are many different ways of
associating to $K3 $ surfaces or abelian surfaces higher dimensional  hyper-K\"ahler manifolds with $b_2=23$ (and also $24$ for the O'Grady examples) built as (desingularizations of) moduli spaces
of simple sheaves on them. Unfortunately algebraic  $K3$ surfaces have only $19$ parameters,
 while these algebraic hyper-K\"ahler manifolds have their deformation spaces (as polarized manifolds) of dimension
$b_2-3>19$, so the general one does not come from an (algebraic) $K3$ surface.
Very curiously, cubic hypersurfaces in $\mathbb{P}^5$, which have $20$ parameters, have also been very much used
for the construction of various $20$ parameters families of algebraic hyper-K\"ahler
manifolds. This is well-understood and even expected Hodge-theoretically, but rather unexpected geometrically. In fact
the variation of Hodge structure on the cohomology of degree $4$
of a cubic fourfold exactly looks like the variation of Hodge structure on the degree
$2$ cohomology of a polarized  hyper-K\"ahler manifold with $b_2=23$.
We will describe several instances of these constructions in Section \ref{sec2}.
The most recent such construction has been provided in \cite{lazasaccavoisin} and we will
achieve   in Sections \ref{sec3} and \ref{sec4} a twisted variant of that construction.
Let $X\subset \mathbb{P}^5 $ be a smooth cubic fourfold. Let $U\subset B:=(\mathbb{P}^5)^*$
be the open set parametrizing smooth hyperplane sections $Y\subset X$.
The family of intermediate Jacobians $J(Y_t)_{t\in U}$ is a
smooth projective fibration $\pi_U:\mathcal{J}_U\rightarrow U$ which according
 to \cite{donagimarkman} has a nondegenerate closed holomorphic $2$-form
making the fibration  Lagrangian.
The following is the main result of \cite{lazasaccavoisin}:
\begin{theo}\label{theolsv} There exists a flat projective fibration $\pi:\overline{\mathcal{J}}\rightarrow B$
extending $\pi_U$, such that the total space $\overline{\mathcal{J}}$ is smooth and hyper-K\"ahler. Furthermore, $\overline{\mathcal{J}}$ is a deformation of a
 $10$-dimensional O'Grady hyper-K\"ahler manifold.
\end{theo}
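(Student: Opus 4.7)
\medskip
\noindent\textbf{Proof proposal.} The strategy is to realise $\overline{\mathcal{J}}$ as a relative compactified intermediate Jacobian fibration, and the most natural way to do this is via Prym varieties. The classical Clemens--Griffiths--Mumford description of the intermediate Jacobian of a smooth cubic threefold $Y$ identifies $J(Y)$ with the Prym variety of an \'etale double cover $\tilde C \to C$ of a smooth quintic curve in $\mathbb{P}^2$, obtained by projecting $Y$ from a line $\ell \subset Y$ and considering the discriminant of the resulting conic bundle. First I would fix a sufficiently general line $\ell \subset X$, and globalise this construction in families, working over a suitable (possibly generically finite base change of a) parameter space above $B$, to obtain families of at-worst nodal curves $\tilde{\mathcal C} \to \mathcal C$ whose relative Prym, sitting inside a Simpson-type relative compactified Jacobian of $\tilde{\mathcal C}$, furnishes the candidate total space $\overline{\mathcal J}$.

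\medskip
The main technical step is the smoothness of this total space. Over $U$ smoothness is automatic. Over the generic point of the discriminant divisor $\Delta \subset B$ the fibre $Y_t$ has a single node and the curves $\tilde C_t$ and $C_t$ acquire correspondingly controlled singularities, so smoothness can be checked by a standard local model for compactified Jacobians of nodal curves. The delicate point is what happens along the deeper strata of $\Delta$, where $Y_t$ acquires worse singularities and the naive compactified Prym typically develops singular points of its own; here one expects to have to perform a careful local analysis of the moduli problem and to introduce a small modification, in the spirit of O'Grady's symplectic resolutions of singular moduli of sheaves on K3 surfaces. Once smoothness is established, the Donagi--Markman closed holomorphic $2$-form on $\mathcal{J}_U$ extends to $\overline{\mathcal J}$ by Hartogs (the complement of $\pi^{-1}(U)$ has codimension at least $2$ in the smooth total space), and its nondegeneracy along each boundary divisor can be verified using the explicit local model at its generic point. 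This endows $\overline{\mathcal J}$ with a hyper-K\"ahler structure and, together with projectivity of the relative compactified Prym over a projective $B$, gives the first assertion.

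\medskip
To identify the deformation type, I would compute the second Betti number and the Beauville--Bogomolov form of $\overline{\mathcal J}$, match them against those of O'Grady's $10$-dimensional example, and then either specialise $X$ to a particular cubic fourfold (for instance one of those related Hodge-theoretically to a K3 surface, as mentioned in the introduction) where $\overline{\mathcal J}$ can be exhibited directly as a deformation of $\mathrm{OG}_{10}$, or invoke known rigidity results for deformation classes of $10$-dimensional hyper-K\"ahler manifolds to conclude. The step I expect to be the hardest by far is the smoothness assertion along the deepest strata of $\Delta$, since this is precisely where the local moduli-theoretic geometry has to match up globally with the structure of the compactification, and where a generic recipe based on compactified Jacobians alone is not sufficient.
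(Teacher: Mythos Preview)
Your starting point---realising $J(Y)$ as a Prym variety via projection from a line and then globalising---is exactly the one used in \cite{lazasaccavoisin}, but the implementation you sketch diverges from theirs at two essential places, and the first of these is a genuine gap.

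First, fixing a single line $\ell\subset X$ does not globalise: a line in the fourfold lies in only a $\mathbb{P}^3$-worth of hyperplanes, so you cannot run the conic-bundle/Prym construction over all of $B$ with one $\ell$. What is actually needed is to let the line vary and work over the family $\mathcal{F}\to B$ of lines in the fibres $Y_t$. The substantive result here (Proposition \ref{prolsv}, proved in \cite{lazasaccavoisin}) is that for general $X$ every hyperplane section $Y_t$---including the most degenerate ones---contains a \emph{very good} line, meaning one for which the associated double cover $\widetilde{C}_\Delta\to C_\Delta$ is an \'etale double cover of an irreducible curve. Over the resulting smooth surjective base change $o_{vg}:\mathcal{F}^{vg}\to B$ the compactified Prym family $\overline{\mathcal{P}}_{\mathcal{F}^{vg}}$ is already smooth, flat and projective with irreducible fibres. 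No local analysis over the deep strata of the discriminant, and no O'Grady-type symplectic resolution, is performed or needed; the whole difficulty is absorbed into the existence of very good lines over the worst fibres.

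Second, the passage from $\overline{\mathcal{P}}_{\mathcal{F}^{vg}}$ back down to $B$ is not by quotienting or by gluing local models, but by a descent argument: one exhibits a relatively ample line bundle on $\overline{\mathcal{P}}_{\mathcal{F}^{vg}}$ that is pulled back from a line bundle $\mathcal{O}(\Theta_1)$ on $\mathcal{J}_{U_1}$ (Lemma \ref{lerelample}), and then defines
\[
\overline{\mathcal{J}}=\mathrm{Proj}\Bigl(\bigoplus_k j_{1*}\bigl(R^0\pi_{U_1*}\mathcal{O}(k\Theta_1)\bigr)\Bigr),
\]
using that $B\setminus U_1$ has codimension $\ge 2$. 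Smoothness and flatness of $\overline{\mathcal{J}}\to B$ then follow because these properties hold after the smooth base change to $\mathcal{F}^{vg}$. Your Hartogs argument for extending the $2$-form and checking nondegeneracy over $U_1$ is correct in spirit, and your plan for the deformation type (specialise $X$ so that $\overline{\mathcal{J}}$ becomes birational to a known OG10) matches the paper; but the core mechanism---very good lines plus Proj descent---replaces precisely the ``hardest step'' you anticipated, and you should not expect an O'Grady-style resolution to appear.
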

We can be slightly more precise, introducing the open set $U_1\subset B$ parametrizing at worst $1$-nodal hyperplane sections
of $X$. The motivation for introducing $U_1$ is the fact that ${\rm codim}\,(B\setminus U_1\subset B)\geq 2$,
and this is a key point in the strategy of \cite{lazasaccavoisin}. The family of intermediate Jacobians
has a standard extension $\mathcal{J}_{U_1}\rightarrow U_1$ over $U_1$, first
as a family of quasiabelian schemes $\mathcal{J}_{U_1}^{\circ}\rightarrow U_1$, and then by applying the Mumford compactification: the fibers of $\pi_{U_1}:\mathcal{J}_{U_1}^{\circ}\rightarrow U_1$ are $\mathbb{C}^*$-bundles over
the four-dimensional intermediate Jacobians $J(\widetilde{Y}_t)$ for $t\in U_1\setminus U$,
and the Mumford compactification is obtained by compactifying the
$\mathbb{C}^*$-bundle to a $\mathbb{P}^1$-bundle with the $0$- and $\infty$-sections glued via a translation.
The compactified hyper-K\"ahler manifold $\overline{\mathcal{J}}$  is in fact a compactification of
$\mathcal{J}_{U_1}$.

The intermediate Jacobian fibration $\mathcal{J}_U$ has a twisted version $\mathcal{J}_U^T$ (which appears in \cite{voisinjag} and plays an important role there, although it is not defined very carefully).
 There are several ways of understanding it (see Section \ref{sec3}).  The set of points in the fiber of
 $\mathcal{J}_U^T$ over a point $t\in U$ identifies with the set of $1$-cycles of degree $1$ in the fiber $Y_t$
 modulo rational equivalence (see Section \ref{sec3}).
  We will
construct in Section \ref{sec3} $\mathcal{J}_U^T$ as an algebraic variety
(a torsor over $\mathcal{J}_U$) and a natural extension $\mathcal{J}_{U_1}^T$ of $\mathcal{J}_U^T$ over $U_1$ which is \'etale (or analytically) locally isomorphic
to $\mathcal{J}_{U_1}$ over $U_1$, thus getting a twisted version of $\mathcal{J}_{U_1}$.
Note that $\mathcal{J}_{U_1}^T$ carries a nondegenerate closed holomorphic $2$-form, for exactly the same reasons $\mathcal{J}_{U_1}$ does.
The goal of this note is to prove the following twisted analogue
of Theorem \ref{theolsv}:
\begin{theo}\label{theotwist} Let $X$ be general cubic fourfold. There exists a flat projective fibration $\pi^T:\overline{\mathcal{J}}^T\rightarrow B$
extending $\pi_{U_1}^T:\mathcal{J}_{U_1}^T\rightarrow U_1$, such that the total space $\overline{\mathcal{J}}^T$ is smooth and hyper-K\"ahler.
\end{theo}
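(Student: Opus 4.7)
The plan is to imitate the strategy of Theorem \ref{theolsv}, using systematically the étale (or analytic) local isomorphism $\mathcal{J}_{U_1}^T \simeq \mathcal{J}_{U_1}$ over $U_1$. First, I would choose an étale cover $\{V_i \to B\}$ together with, on each $V_i \cap U_1$, an isomorphism $\phi_i : \mathcal{J}^T|_{V_i \cap U_1} \xrightarrow{\sim} \mathcal{J}|_{V_i \cap U_1}$, so that the transition maps $\phi_i \circ \phi_j^{-1}$ are translations by sections $s_{ij}$ of $\mathcal{J}$ over $V_i \cap V_j \cap U_1$, encoding the (torsion) twisting torsor class. Pulling back the LSV compactification via the structure maps $V_i \to B$ then provides a smooth projective family $\overline{\mathcal{J}}_i \to V_i$ together with a canonical identification of its restriction over $V_i \cap U_1$ with $\mathcal{J}^T|_{V_i \cap U_1}$ through $\phi_i$.

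To glue the $\overline{\mathcal{J}}_i$ into a smooth projective fibration over $B$, I would argue as follows. Each translation $s_{ij}$ is, by the group-scheme nature of the quasi-abelian Jacobians and the functoriality of Mumford's construction, the restriction of a biregular automorphism of $\overline{\mathcal{J}}|_{V_i \cap V_j \cap U_1}$. Using that $\overline{\mathcal{J}}$ is smooth and proper over $B$ and that $B \setminus U_1$ has codimension $\geq 2$, a Hartogs-type extension applied both to the automorphism and to its inverse shows that each $s_{ij}$ extends uniquely to a biregular automorphism of $\overline{\mathcal{J}}|_{V_i \cap V_j}$. The cocycle condition, valid over $U_1$, propagates to all of $V_i \cap V_j \cap V_k$ by analytic continuation, yielding a smooth projective fibration $\pi^T : \overline{\mathcal{J}}^T \to B$ extending $\pi_{U_1}^T$. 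The closed holomorphic $2$-form on $\mathcal{J}^T_{U_1}$ then extends to each chart by Theorem \ref{theolsv} and glues to a global nondegenerate symplectic form on $\overline{\mathcal{J}}^T$, since the translations preserve the Lagrangian symplectic structure on the LSV compactification.

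I expect the most delicate step to be upgrading the existence of a holomorphic symplectic form to the full hyper-K\"ahler property: one must show that $\overline{\mathcal{J}}^T$ is simply connected with $h^{2,0} = 1$. Since $\overline{\mathcal{J}}^T$ is analytically locally isomorphic to $\overline{\mathcal{J}}$ over $B$, the relative Hodge sheaves $R^q \pi^T_* \Omega^p$ match those of the untwisted LSV family after étale descent, and $h^{p,q}(\overline{\mathcal{J}}^T) = h^{p,q}(\overline{\mathcal{J}})$ should follow by a Leray argument, translations acting trivially on the cohomology of the fibers. For simple connectedness and irreducibility of the holonomy, I would argue by deformation, exhibiting a smooth proper connected family in which $\overline{\mathcal{J}}^T$ and $\overline{\mathcal{J}}$ (or an isogenous OG10-type manifold) both appear as members, obtained by varying $X$ together with the twisting class so that the twist becomes trivial, or isogenous to the trivial one, in a limit. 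The generality assumption on $X$ enters precisely to guarantee the codimension bound $\mathrm{codim}(B \setminus U_1) \geq 2$ and to ensure the deformation argument is geometrically well-behaved. This last step is the main obstacle and the essential novel input beyond a formal transplantation of the LSV machinery to the twisted setting.
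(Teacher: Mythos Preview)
Your gluing approach has two genuine gaps. First, the ``Hartogs-type extension'' of the translation automorphisms from $\overline{\mathcal{J}}|_{V_{ij}\cap U_1}$ to $\overline{\mathcal{J}}|_{V_{ij}}$ is not justified: Hartogs applies to functions or to sections of reflexive sheaves, not to morphisms with projective target. The fibers of $\overline{\mathcal{J}}$ over $B\setminus U_1$ are degenerate compactified Pryms and certainly contain rational curves, so the usual extension results for maps to abelian varieties do not apply; the graph-closure of your automorphism could a priori acquire positive-dimensional fibers over the bad locus. Second, and more seriously, even if the gluing produced a smooth proper algebraic space over $B$, you have not explained why it is \emph{projective}. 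Descent of a relatively ample line bundle through a torsor twist is exactly the nontrivial point, and you simply assert ``yielding a smooth projective fibration'' without producing a polarization compatible with the $s_{ij}$.

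The paper uses a different descent mechanism that handles both issues at once. Instead of gluing local compactifications, it works on the smooth surjective cover $o_{vg}:\mathcal{F}^{vg}\to B$ by very good lines (this is where the generality of $X$ is actually used, via Proposition~\ref{prolsv}), on which the twisted and untwisted families are isomorphic by Lemma~\ref{letrivdroite1} and the LSV compactification $\overline{\mathcal{P}}_{\mathcal{F}^{vg}}$ already exists. The whole construction is then the single formula
\[
\overline{\mathcal{J}}^T=\mathrm{Proj}\Bigl(\bigoplus_k j_{1*}\bigl(R^0\pi_{U_1*}^T\mathcal{O}(k\Theta_1^T)\bigr)\Bigr),
\]
and the substantive input is Proposition~\ref{prothetaT}: any relatively ample $\Theta_1^T$ on $\mathcal{J}_{U_1}^T$ pulls back to a line bundle on $\overline{\mathcal{P}}_{\mathcal{F}^{vg}}$ whose unique extension is still relatively ample. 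This is proved by a Hodge-index argument (Lemma~\ref{lepournumtri}) comparing it fiberwise to the known ample $\Theta^{vg}$. Smoothness, flatness and projectivity of the Proj then follow by faithfully flat descent from $\mathcal{F}^{vg}$, with no need to extend any automorphism across $B\setminus U_1$.

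For the hyper-K\"ahler conclusion, the paper's argument is also more direct than your Leray/holonomy sketch: one specializes $X$, within the Zariski open set where the construction works, to a cubic carrying an integral degree-$4$ Hodge class restricting to the generator on hyperplane sections (such cubics form a dense union of hypersurfaces). There $\mathcal{J}_U^T\cong\mathcal{J}_U$, so $\overline{\mathcal{J}}^T$ and $\overline{\mathcal{J}}$ are birational, hence deformation equivalent by Huybrechts, and simple connectedness together with $h^{2,0}=1$ is inherited from the OG10 deformation type.
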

 The conclusion above holds for general $X$, that is, all
cubic fourfolds parametrized by a certain Zariski open set of the space of all
smooth cubic fourfolds. When the cubic carries a degree $4$ integral Hodge class which restricts to the generator
of $H^4(Y_t,\mathbb{Z})$ on its hyperplane sections, but it is still general in the sense that it belongs to this Zariski open set, which happens along
a countable dense union of hypersurfaces in the moduli space, the two
fibrations $\mathcal{J}_{U}$ and $\mathcal{J}_{U}^T$ are isomorphic over $U$ and the two
varieties $\overline{\mathcal{J}}^T$ and $\overline{\mathcal{J}}$ are thus birational. It is not clear that they are then isomorphic, but Huybrechts \cite{huybrechts} shows  that they are deformation equivalent. In particular the varieties $\overline{\mathcal{J}}^T$ are deformation equivalent
to OG10 manifolds.
One may wonder if the varieties $\overline{\mathcal{J}}^T$ and $\overline{\mathcal{J}}$ are birational for very general, nonspecial $X$. It is clear that they are not birational as Lagrangian fibrations, since
one has a rational section, while the other does not have  a rational section. It is likely that the varieties we are considering have a unique Lagrangian fibration so in fact are not isomorphic or even birational,
but we have not pursued this.

\vspace{0.5cm}

{\bf Thanks.} {\it I am happy to thank Radu Laza and Giulia Sacc\`a with whom I  had many interesting discussions at the origin of  this
  note.
}
\section{Deforming and constructing hyper-K\"ahler manifolds}
\subsection{Deformation theory and the period map}
The Bogomolov-Tian-Todorov theorem says that a compact K\"ahler manifold
$X$
with trivial canonical bundle has unobstructed deformations. This means that
a first order deformation of the complex structure of $X$, which is given by an element by $H^1(X,T_X)$ called the Kodaira-Spencer class, see \cite[9.1.2]{voisinbook}, extends to a deformation of arbitrarily large order, and
in fact there exists in this case a universal family $\mathcal{X}\rightarrow B$ where $B$ is a
ball in $H^1(X,T_X)\cong \mathbb{C}^N$, $\mathcal{X}$ is a complex manifold
and $\phi$ is proper holomorphic with central fiber
$\mathcal{X}_0\cong X$ such that the Kodaira-Spencer
map $T_{B,0}\rightarrow H^1(X,T_X)$ (the classifying map for first order deformations) is an isomorphism.

Assume now that $X$ is hyper-K\"ahler. It is a general fact that the fibers
$\mathcal{X}_t$ for $t$ small are still hyper-K\"ahler: indeed, the close fibers are still K\"ahler
as small deformations of a compact K\"ahler manifold, and the holomorphic $2$-form still exists on
$\mathcal{X}_t$ for small $t$ because the Hodge numbers $h^{p,q}(X):={\rm dim}\,H^{p,q}(X),\,\,H^{p,q}(X)=H^q(X,\Omega_X^p)$, are constant
under a deformation of compact K\"ahler manifolds.
Ehresmann's fibration theorem tells us that the family $\phi:\mathcal{X}\rightarrow B$ is
$\mathcal{C}^\infty$ trivial and in particular topologically trivial:
$\mathcal{X}\stackrel{homeo}{\cong} \mathcal{X}_0\times B$.
In particular, we have canonical identifications
\begin{eqnarray}
\label{eqisocomp}
H^2(\mathcal{X}_t,\mathbb{C})\cong H^2(\mathcal{X}_0,\mathbb{C})=H^2({X},\mathbb{C}).
\end{eqnarray}
The period map
$\mathcal{P}$ associates to $t\in B$ the class
$[\sigma_t]$ of the closed holomorphic $2$-form $\sigma_t$ on $\mathcal{X}_t$, seen as an element
of $H^2(X,\mathbb{C})$ via the isomorphism (\ref{eqisocomp}).
Note that $\sigma_t$ is defined up to a multiplicative coefficient, hence
$[\sigma_t]$
is well-defined only in $\mathbb{P}(H^2(X,\mathbb{C}))$. Hence the period map takes value in
$\mathbb{P}(H^2(X,\mathbb{C}))$. It is a general result due to Griffiths that the period map
is holomorphic. Furthermore the computation of its differential shows in our case
that the period map is an immersion.
Note that ${\rm dim}\,H^1(X,T_X)={\rm dim}\,H^1(X,\Omega_X)=b_2(X)-2$, as Hodge
theory provides the Hodge decomposition $H^2(X,\mathbb{C})=H^{2,0}(X)\oplus H^{1,1}(X)\oplus H^{0,2}(X)$ with
$H^{1,1}(X)\cong H^1(X,\Omega_X)$, the two other spaces being $1$-dimensional.
It follows that
$\mathcal{P}(B)\subset \mathbb{P}(H^2(X,\mathbb{C}))$ is a germ $\mathcal{D}_{loc}$ of analytic hypersurface.
One can use it (see \cite{beauville}, \cite{huysurvey}) to
construct the Beauville-Bogomolov quadratic form $q$ on
$H^2(X,\mathbb{Q})$.
\begin{theo} (see \cite{beauville}) Let $X$ be a hyper-K\"ahler manifold of dimension
$2n$.
There exists a nondegenerate quadratic form of signature $(3,b_2(X)-3)$ on $H^2(X,\mathbb{Q})$ which is defined up to a multiplicative coefficient by the following condition: for some positive rational number
$\lambda$, one has \begin{eqnarray}\label{eqqbo}\int_X\alpha^{2n}=\lambda q(\alpha)^n
\end{eqnarray}
for any
$\alpha\in H^2(X,\mathbb{Q})$.
\end{theo}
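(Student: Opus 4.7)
The plan is to construct $q$ by proving that the homogeneous polynomial $F\colon \alpha\mapsto \int_X\alpha^{2n}$ of degree $2n$ on $H^2(X,\mathbb{R})$ is itself an $n$-th power of a nondegenerate quadratic form $q$, up to a positive scalar $\lambda$. Once such a Fujiki-type relation $F=\lambda q^n$ is established, the quadratic form $q$ is determined up to sign, and the rationality of $F$ on $H^2(X,\mathbb{Q})$ forces $q$ to be proportional to a rational form.

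First, I would exploit the Hodge decomposition $H^2(X,\mathbb{C})=\mathbb{C}\sigma\oplus H^{1,1}(X)\oplus \mathbb{C}\bar\sigma$ and write $\alpha=a\sigma+\gamma+b\bar\sigma$. Since only terms of type $(2n,2n)$ survive integration,
\begin{equation*}
F(\alpha)=\sum_{i=0}^{n}\frac{(2n)!}{(i!)^2(2n-2i)!}\,a^ib^i\int_X\sigma^i\bar\sigma^i\gamma^{2n-2i}.
\end{equation*}
The key step is to prove combinatorial identities among the integrals $\int_X\sigma^i\bar\sigma^i\gamma^{2n-2i}$ which collapse this sum into a single $n$-th power $F(\alpha)=\lambda\bigl(ab+c\,\tilde q(\gamma)\bigr)^n$, with $\tilde q$ a quadratic form on $H^{1,1}(X)$ proportional to $\gamma\mapsto\int_X\gamma^2\sigma^{n-1}\bar\sigma^{n-1}$. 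The conceptual input is the local Torelli theorem recalled in the excerpt: since $\sigma_t^{n+1}=0$ for type reasons, $F$ vanishes identically along the image $\mathcal{D}_{loc}$ of the period map, which is a smooth analytic hypersurface in $\mathbb{P}(H^2(X,\mathbb{C}))$ of dimension $b_2-3$. Differentiating $F([\sigma_t])=0$ along deformations parametrized by $H^1(X,T_X)\cong H^{1,1}(X)$ produces precisely the relations needed to identify $F$ with a pure $n$-th power, i.e. to show that the hypersurface $\{F=0\}\subset \mathbb{P}(H^2(X,\mathbb{C}))$ is reduced equal to $\mathcal{D}_{loc}$ with multiplicity $n$, hence cut out by a quadric.

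Once $q$ and $\lambda$ are in hand, the signature $(3,b_2(X)-3)$ is read off from the real Hodge decomposition. On the $2$-real-dimensional subspace $(H^{2,0}(X)\oplus H^{0,2}(X))\cap H^2(X,\mathbb{R})$, the form $q$ is positive definite after fixing the sign of $\lambda$; on $H^{1,1}(X)_{\mathbb{R}}$ it is proportional to the Hodge-Riemann pairing $\gamma\mapsto \int_X\gamma^2\sigma^{n-1}\bar\sigma^{n-1}$, which is positive on a K\"ahler class $\omega$ and negative definite on its primitive orthogonal of real dimension $b_2(X)-3$. Summing these contributions yields the announced signature $(3,b_2(X)-3)$. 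The main obstacle is the combinatorial/representation-theoretic step in the second paragraph: extracting a single quadratic form from the collection of integrals $\int_X\sigma^i\bar\sigma^i\gamma^{2n-2i}$ and checking the clean identity $F=\lambda q^n$ is nontrivial, and is most cleanly handled via Verbitsky's description of $H^{\ast}(X,\mathbb{C})$ as an $\mathfrak{so}(4,b_2-3)$-module, though in principle the local Torelli input together with an irreducibility analysis of the hypersurface $\{F=0\}$ already suffices.
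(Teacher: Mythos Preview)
Your proposal is correct, and the route the paper actually takes is precisely the one you relegate to your final sentence. The paper does not expand $F(\alpha)$ in the Hodge decomposition, prove combinatorial identities among the integrals $\int_X\sigma^i\bar\sigma^i\gamma^{2n-2i}$, or invoke Verbitsky's $\mathfrak{so}$-module structure. It argues directly: since $\sigma_t^{n+1}=0$ on every small deformation $X_t$, the degree-$2n$ hypersurface $\{f=0\}\subset\mathbb{P}(H^2(X,\mathbb{C}))$ defined by $f(\alpha)=\int_X\alpha^{2n}$ has multiplicity at least $n$ along the germ $\mathcal{D}_{loc}$; hence some irreducible component has multiplicity $\geq n$, and by the degree bound that component has degree $\leq 2$. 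It is not a hyperplane, so it is a quadric $Q$, and $f=\lambda q^n$. Rationality of $f$ then forces $q$ and $\lambda$ rational. Thus the ``main obstacle'' you flag never arises in the paper's argument---those combinatorial identities are \emph{consequences} of $f=\lambda q^n$ rather than prerequisites for it. Your signature discussion via the Hodge--Riemann relations is more explicit than the paper's one-line appeal to the Hodge index theorem, but equivalent in content. One phrasing correction: $\{F=0\}$ is not ``reduced equal to $\mathcal{D}_{loc}$ with multiplicity $n$''; rather $\mathcal{D}_{loc}$ is open in the reduced quadric $Q$, and as a scheme $\{F=0\}=nQ$.
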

This form is usually normalized in such a way that it takes integral values on $H^2(X,\mathbb{Z})$ and is not divisible as an intersection form on $H^2(X,\mathbb{Z})$;
it is then uniquely determined.
Although Beauville gives an explicit formula for $q$,
the existence of $q$ follows directly from
the study of the period map.
Indeed, we observe that  classes $[\sigma_t]\in \mathcal{D}_{loc}$ satisfy
$[\sigma_t]^{n+1}=0$ in $H^2(X,\mathbb{C})$. Indeed, they are classes of holomorphic
$2$-forms $\sigma_t$ on (some deformation
$X_t$ of) $X$, and clearly $\sigma_t^{n+1}=0$ as a form on $X$.
It follows that the hypersurface $\mathcal{H}_{2n}$ of degree $2n$ in $\mathbb{P}(H^2(X,\mathbb{C}))$
which is defined by the degree $2n$
homogeneous form $f$ given by the formula $f(\alpha)= \int_X\alpha^{2n}$ contains
$\mathcal{D}_{loc}$ and
has multiplicity at least $n$ along it, hence has a component of multiplicity
$\geq n$, which is not  a hyperplane. One then concludes that this component is a quadric hypersurface $Q$ and that
$\mathcal{D}_{loc}$ is open in   $Q$. Thus the equation $q$ of the quadric $Q$ and $f$ are related by
$f=\lambda q^n$ for some coefficient $\lambda$. Finally, using the fact that $f$ is rational, one
sees that both $\lambda$ and $q$ can be taken to be rational.
The statement concerning the signature of $q$ follows from the Hodge index theorem
and further identities derived from (\ref{eqqbo}).

The Verbitsky Torelli theorem for marked hyper-K\"ahler manifolds involves the integral structure on
cohomology. It says the following:
\begin{theo} (\cite{verbi}, \cite{huybrechtsbourbaki}) Let $X,\,X'$ be two hyper-K\"ahler manifolds
which are deformation equivalent. Assume there is an isomorphism $\phi:H^2(X,\mathbb{Z})\cong H^2(X',\mathbb{Z})$ which is obtained by transporting cohomology along a path of deformations from
$X$ to $X'$ and such that $\phi([\sigma_X]=[\sigma_{X'}]$. Then $X$ and $X'$ are birationally
equivalent. If furthermore $\phi$ sends one K\"ahler class on $X$ to a K\"ahler class
on $X'$, $X$ and $X'$ are isomorphic.
\end{theo}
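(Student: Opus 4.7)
The plan is to reduce the theorem to Verbitsky's inseparability statement on the marked moduli space, together with Huybrechts' results on bimeromorphic hyper-K\"ahler manifolds. Introduce the marked moduli space $\mathfrak{M}_\Lambda$ of isomorphism classes of pairs $(Y,\eta)$, where $Y$ is hyper-K\"ahler deformation equivalent to $X$ and $\eta\colon H^2(Y,\mathbb{Z})\xrightarrow{\sim}\Lambda$ is a marking with values in the abstract lattice $\Lambda=(H^2(X,\mathbb{Z}),q)$. The hypotheses of the theorem place $(X,\mathrm{id})$ and $(X',\phi)$ in a common connected component $\mathfrak{M}_\Lambda^0$, and the period map $\mathcal{P}\colon\mathfrak{M}_\Lambda\to\Omega_\Lambda=\{[\sigma]\in\mathbb{P}(\Lambda\otimes\mathbb{C})\mid q(\sigma)=0,\,q(\sigma+\bar\sigma)>0\}$ sends both points to the same period. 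Local Torelli, which is exactly the immersion property of $\mathcal{P}$ already recalled in the excerpt, shows that $\mathcal{P}$ is \'etale on $\mathfrak{M}_\Lambda$, so from here the problem is purely about global behaviour.

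The heart of the argument is the claim that two points of $\mathfrak{M}_\Lambda^0$ with the same period are \emph{inseparable}, in the sense that every pair of open neighborhoods of them meets. This is Verbitsky's theorem, and it is the step I expect to be the main obstacle. The strategy is to exploit twistor families: each K\"ahler class $\omega$ on a fiber produces, via HK rotation, a twistor family whose period image is a rational curve $T_\omega\simeq\mathbb{P}^1\subset\Omega_\Lambda$, and the total collection of such twistor lines is rich enough that any two points of $\Omega_\Lambda$ can be joined by a chain of at most three twistor lines (a Euclidean geometry computation inside the Grassmannian of positive $3$-planes determined by $q$). Each twistor line lifts canonically through the \'etale map $\mathcal{P}$ to $\mathfrak{M}_\Lambda^0$ because the twistor family itself provides a complete lift. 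Running such chains through neighborhoods of the two prescribed points and applying a limiting argument then yields the inseparability statement.

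Granted inseparability, the next step is Huybrechts' observation that two inseparable points of $\mathfrak{M}_\Lambda^0$ correspond to bimeromorphic manifolds, with the induced identification of $H^2$ agreeing with the prescribed marking: one selects convergent sequences $(X_n,\eta_n)\to(X,\eta_X)$ and $(X_n,\eta'_n)\to(X',\eta_{X'})$ realizing the same underlying varieties $X_n$, passes to the relative Douady space of the resulting family, and extracts a limit component whose generic fiber is the graph of the identity and whose special fibers over the two limits produce a cycle-theoretic correspondence inducing a bimeromorphism $X\dashrightarrow X'$ compatible with $\phi$. This delivers the first conclusion of the theorem. For the final assertion, I would invoke Huybrechts' result that any bimeromorphism $f\colon X\dashrightarrow X'$ between hyper-K\"ahler manifolds which pulls back some K\"ahler class of $X'$ to a K\"ahler class of $X$ is in fact an isomorphism: the exceptional divisors of a resolution of $f$ have discrepancy zero because $K_X$ and $K_{X'}$ are trivial, so $f$ is an isomorphism in codimension one, and the K\"ahler compatibility then rules out any small contraction on either side, upgrading $f$ to a biholomorphism.
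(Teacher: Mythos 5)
This theorem is not proved in the paper: it is quoted as a known result, with the proof delegated entirely to the cited references (Verbitsky's global Torelli theorem and Huybrechts' Bourbaki report, together with Huybrechts' earlier work on bimeromorphic hyper-K\"ahler manifolds). Your outline is precisely the proof found in those references --- marked moduli space, local Torelli making the period map \'etale, Verbitsky's inseparability of same-period points in a connected component via chains of twistor lines, Huybrechts' Douady-space argument converting inseparability into a bimeromorphism compatible with the marking, and finally the criterion that a bimeromorphism matching K\"ahler classes extends to an isomorphism. So there is no divergence of method to report; the only honest comparison is between your sketch and the actual content of the cited proofs.

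On that score, one step deserves flagging as more delicate than your summary suggests. You say each twistor line ``lifts canonically through the \'etale map because the twistor family itself provides a complete lift,'' but the lift of a twistor line through a point $(Y,\eta)$ exists only if the corresponding positive $3$-plane is spanned by the period together with an actual K\"ahler class of $Y$; when you continue a chain, you must know the K\"ahler cone of the manifold sitting over the next vertex, which is a priori uncontrolled. This is resolved by working only with \emph{generic} twistor lines, whose fibers have trivial Picard group so that the K\"ahler cone is the full positive cone, and then showing that chains of such generic lines still connect the period domain; the subsequent ``limiting argument'' to reach non-generic points is exactly where Verbitsky's original proof required the most care (and later corrections). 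Your final step is also stated a little loosely --- the standard argument does not so much ``rule out small contractions'' as observe that both projections from the graph closure are isomorphisms in codimension one and that the pullback of a K\"ahler class remaining K\"ahler forces the correspondence cycle to reduce to the graph of an isomorphism --- but the conclusion you invoke is a correct theorem of Huybrechts. None of this is a gap in the sense of a wrong idea; it is the expected compression of a long proof into a paragraph.
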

\subsection{Constructing hyper-K\"ahler manifolds \label{sec2}}
One particularity of hyper-K\"ahler geometry is the fact that although
their deformation   theory
has an analytic and transcendental character, all known examples have been constructed by
means of
algebraic geometry. Note again that algebraic geometry provides constructions for the underlying
complex manifolds, but of course not for the hyper-K\"ahler metrics.
In dimension $2$, hyper-K\"ahler manifolds are $K3$ surfaces (see \cite{ast2}). These surfaces are all obtained by deforming smooth quartic surfaces in $\mathbb{P}^3$, defined by one
degree $4$ homogeneous equation $f(X_0,\ldots,X_3)$.
The projective dimension of the space of such polynomials is $34$, while ${\rm Aut}\,\mathbb{P}^3=PGl(4)$ has dimension $15$, so that the family of isomorphism classes of quartic
$K3$ surfaces  has dimension $19$. This is a general fact of
hyper-K\"ahler geometry. The number $h^{1,1}$ in this case is equal to $20$ and this is the dimension
of the space of all deformations of the $K3$ surface $S$. The deformations of $S$ as a quartic surface are restricted since these $K3$ surfaces carry a holomorphic line bundle $L$ such that
$c_1(L)^2=4$. The class $c_1(L)$ is a Hodge class, that is integral of type $(1,1)$, and equivalently
it is integral and satisfies $q(c_1(L),\sigma_S)=0$. Note that the Beauville-Bogomolov form $q$ is in this case the intersection pairing on middle cohomology. The general situation will be similar: given
a hyper-K\"ahler manifold $X$ equipped with an ample line bundle $L$, the polarized deformations of $X$, that is the deformations of the pair $(X,L)$ consisting of a complex manifold and a holomorphic line bundle on it, have $h^{1,1}(X)-1$ parameters, and are parameterized by the period domain $\mathcal{D}_l\subset \mathcal{D}$ determined by the condition
$q(\sigma,l)=0$ where $l=c_1(L)$.

In dimension $4$, there are two known topological types  of hyper-K\"ahler manifolds, namely
$S^{[2]}$ and $K_2(A)$, where $S$ is a $K3$ surface and $A$ is a $2$-dimensional abelian surface (or complex torus).
The second punctual Hilbert scheme $X^{[2]}$ is defined for any algebraic variety or complex manifold
$X$ as the set of length $2$ subschemes of $X$, which consist either
of two distinct points of $X$  or a point with a tangent vector.
If $X$ is smooth, this is the desingularization of the symmetric product
$X^{(2)}$ obtained by blowing-up the diagonal. The generalized Kummer variety
$K_2(A)$ is a fourfold obtained as follows: using the group structure of $A$,  the smooth variety (or complex manifold)  $A^{[3]}$
parametrizing length $3$ subschemes of $A$ admits the sum morphism $A^{[3]}\rightarrow A$, and
$K_2(A)$ is any fiber of this morphism. These two examples appear in \cite{beauville}, as their next generalizations:
It is a general theorem due to Fogarty \cite{fogarty} that the $n$-th Hilbert
scheme $S^{[n]}$ is smooth for any $n$ and any smooth surface $S$. For $n\leq3$, the result is true without any assumptions on the dimension because length $3$ subschemes are supported on smooth surfaces. Beauville shows that
$S^{[n]}$ and $K_n(A)\subset A^{[n+1]}$ are hyper-K\"ahler manifolds of dimension $2n$ for $S$ a
$K3$ surface, $A$ a $2$-dimensional complex torus. The holomorphic $2$-form on
$S^{[n]}$ comes from the holomorphic $2$-form $\sigma_S$ on $S$ by descending the $(2,0)$-form
$\sum_ipr_i^*\sigma_S$ on $S^n$, which is invariant under the action of the symmetric group
$\mathfrak{S}_n$, to the smooth part of the quotient $S^{(n)}=S^n/\mathfrak{S}_n$ and then showing that it extends to a $(2,0)$-form on the
desingularization $S^{[n]}$.

Apart from these two series of examples, only two other deformation types are known, of respective dimensions $6$ and $10$, and they have been constructed by O'Grady \cite{OG1}, \cite{OG2}.
The $10$-dimensional example is constructed as follows: Consider the moduli space of simple
 coherent sheaves $\mathcal{E}$ of rank $2$ on a $K3$ surface $S$, satisfying
 ${\rm det}\,\mathcal{E}=\mathcal{O}_S$ (equivalently $c_1(\mathcal{E})=0$ in $H^2(S,\mathbb{Z})$), and ${\rm deg}\,c_2(\mathcal{E})=4$. By results of Mukai \cite{mukai}, this is a smooth
 algebraic variety with an everywhere  nondegenerate closed  holomorphic (in fact algebraic) $2$-form. The variety is quasiprojective and admits a natural projective completion which is a moduli space of
 semistable sheaves (with respect to a given polarization). The later is singular along the locus
 of nonstable objects (for example $\mathcal{I}_z\oplus \mathcal{I}_{z'}$ where $z$ and $z'$ are
 two subschemes of length $2$ in $S$). O'Grady constructs a hyper-K\"ahler desingularization of this singular moduli space. This variety has $b_2=24$, hence its algebraic deformations have
 $21$ parameters. Those constructed starting from an algebraic $K3$ surface have $19$ parameters.

Cubic fourfolds, that is smooth cubic hypersurfaces in $\mathbb{P}^5$, play an unexpected role
in this study. Although they are Fano varieties, hence have a priori nothing to do  with hyper-K\"ahler varieties, the Hodge decomposition on their degree $4$ cohomology takes the form
$$H^4(X,\mathbb{C})=H^{3,1}(X)\oplus H^{2,2}(X)\oplus H^{1,3}(X),$$
with ${\rm dim}\,H^{3,1}(X)={\rm dim}\,H^{1,3}(X)=1$. Furthermore, the space $H^{2,2}$ has dimension $21$, and it contains one Hodge (in fact algebraic) class, namely $h^2$, $h=c_1(\mathcal{O}_X(1))$. This Hodge structure is polarized by the intersection form
$\langle\,,\,\rangle_X$
on the middle degree cohomology $H^4(X,\mathbb{Z})$ (which will thus play the role of the Beauville-Bogomolov form). This notion of polarization of the Hodge structure says that the line $H^{3,1}(X)\subset H^4(X,\mathbb{C})$ is isotropic for $\langle\,,\,\rangle_X$, and futhermore there is a (open) sign
condition saying in our case that $\langle \alpha,\overline{\alpha}\rangle_X<0$ for $0\not=\alpha\in H^{3,1}(X)$.
Thus up to shift of bigrading by $(1,1)$, (and change of sign for the quadratic
form), what we get is a polarized Hodge structure of hyper-K\"ahler type.
Furthermore, the cubic has $20$ parameters (this is the dimension of the projective space
of cubic homogeneous polynomials in $6$ variables minus the dimension of $PGl(6)$) and in fact
the period map $t\mapsto  H^{3,1}(X_t)\subset H^4(X_t,\mathbb{C})=H^4(X,\mathbb{C})$ is a local isomorphism to the polarized period  domain $\mathcal{D}_{h^2}$ which as before is an open set in the quadric
defined by the Poincar\'e intersection pairing on $H^4(X,\mathbb{Z})^{\perp h^2}$. The cubic is there only to give us a polarized variation of Hodge structure  of hyper-K\"ahler type with $20$ parameters.
Surprisingly enough, there are many hyper-K\"ahler manifolds associated with a cubic fourfold,
which have their variation of Hodge structure isomorphic to the one described above  on the degree $4$ cohomology of the cubic (with a shift of degree).
The known examples are:

(1)  The variety of lines of a cubic fourfold $X$ \cite{bedo}. This is a hyper-K\"ahler fourfold
$F(X)$ and the incidence correspondence $P\subset F(X)\times X$ (which is the universal
$\mathbb{P}^1$-bundle over $F(X)$) induces an isomorphism
$P^*: H^4(X,\mathbb{Z})\cong H^2(F(X),\mathbb{Z})$ of Hodge structures. Beauville and Donagi show that for some special cubic fourfolds (more precisely ``Pfaffian'' cubic fourfolds), $F(X)$ becomes isomorphic to $S^{[2]}$ for some $K3$ surfaces $S$.

(2) The variety of rational curves of degree $3$ in $X$ \cite{llsvs}. This is a $\mathbb{P}^2$-bundle on a variety which is the blow-up of a hyper-K\"ahler $8$-fold $Z(X)$ along a Lagrangian embedding
of $X$ in $Z(X)$. Again, the variations of Hodge structures on $H^4(X)$ and $H^2(Z(X))$ coincide, using the fact that up to a unininteresting summand, the $H^2$ of $Z(X)$ and $F_3(X)$ coincide, and
using the universal correspondence between $F_3(X)$ and $X$. This variety $F(X)$ has been shown
in \cite{addingtonlehn} to be
of the same  deformation type as $S^{[4]}$, where $S$ is a $K3$ surface.

(3)  The intermediate Jacobian fibration of a cubic fourfold has been already mentioned in the introduction. This is a quasiprojective holomorphically symplectic $10$-fold. It is proved in
\cite{lazasaccavoisin} that it admits an hyper-K\"ahler compactification
$\overline{\mathcal{J}}$, and furthermore that $\overline{\mathcal{J}}$ is deformation equivalent to
O'Grady's $10$-dimensional varieties. One remark is that contrarily to the previous cases
where we exhibited a complete family of projective hyper-K\"ahler manifolds, (hence a family with
$b_2-3$ parameters), the expected dimension for the O'Grady examples should be $21$ while our family
has $20$ parameters. This is due to the fact that the varieties that we construct
are by construction Lagrangian fibered over
$\mathbb{P}^5$. Hence their Picard number is at least $2$,  containing one class pulled-back from $\mathbb{P}^5$ and one ample class, and in fact it has to be generally equal to $2$, since the family we construct has $20$ parameters and the manifolds have $b_2=24$.

The rest of this paper is devoted to the description of the twisted version of (3), giving rise to a second $20$ parameters family of  deformations of O'Grady $10$-dimensional varieties.
\begin{rema}{\rm The cubic fourfold is not the only Fano variety
$X$  which has
associated hyper-K\"ahler manifolds whose variation of Hodge structure on $H^2$ is isomorphic (up to a shift of degree) to the variation of Hodge structure on some cohomology group of $X$.
Another example can be found in \cite{debarrevoisin}, where  $X$ is a Pl\"ucker hyperplane section
of the Grassmannian $G(3,V_{10})$ of $3$-dimensional vector subspaces of a given
$10$-dimensional vector space $V_{10}$, so that $X$ defined by a general element $s$ of $\bigwedge^3V_{10}^*$. We show that the fourfold $K_s\subset G(6,V_{10})$ consisting of
$6$-dimensional vector subspaces of $V_{10}$
on which $s$ vanishes identically is a hyper-K\"ahler fourfold, with VHS on $H^2_{prim}$ isomorphic to the VHS on $H^{20}(X)_{prim}$.
}
\end{rema}
\section{The twisted intermediate Jacobian fibration: smooth and $1$-nodal case \label{sec3}}
Let $X\subset \mathbb{P}^5$ be  a smooth cubic fourfold, and $B=(\mathbb{P}^5)^\vee$ be the set of hyperplane sections of $X$ and let $\mathcal{Y}\subset B\times X$ be the universal hypersurface. Let $U\subset U_1\subset B$ be the Zariski open sets of $B$ parametrizing smooth, resp. $1$-nodal, hyperplane sections of $X$. The reason for
introducing $U_1$  is, as in \cite{lazasaccavoisin} which we follow closely, the fact that
$B\setminus U_1$ has codimension $2$ in $B$, so that the flat
fibration with smooth, hence normal, total space $\overline{\mathcal{J}}^T$ will be determined
(see Section \ref{sec4})  by
its restriction  $\mathcal{J}_{U_1}^T$ over $ U_1$.

As we mentioned in the introduction,  the twisted intermediate Jacobian fibration
$\mathcal{J}_{U}^T$ over $U$
can be interpreted as parametrizing $1$-cycles of degree $1$ in the fibers of the universal
(smooth) hypersurface $u:\mathcal{Y}_U\rightarrow U$ over $U$.
If we want to describe it as  an abstract torsor over $\mathcal{J}_U$ or even better
as a complex
manifold, it can be  defined by considering Deligne-Beilinson cohomology along the fibers of  $u:\mathcal{Y}_U\rightarrow U$. This gives an exact sequence of sheaves of groups on $U$
 $$0\rightarrow \mathcal{J}_U\rightarrow \mathcal{H}^4_{\mathcal{D}}\stackrel{c}{\rightarrow} R^4u_*\mathbb{Z}\rightarrow 0,$$
 where the sheaf $R^4u_*\mathbb{Z}$ is canonically isomorphic to $\mathbb{Z}$.
 We can then define
 $\mathcal{J}_U^T$ as $c^{-1}(1)$ (note that we identify here the analytic  group fibration
  and its  sheaf of holomorphic sections).
 The definition above is not good to understand the extension $\mathcal{J}_{U_1}^T$
 and does not describe $\mathcal{J}_U^T$ as an algebraic object.
 We will thus give here an alternative description of $\mathcal{J}_{U_1}^T$ actually as a twist
 of $\mathcal{J}_{U_1}$. Our goal in this section is to establish the following result:
 \begin{prop} \label{proptwist} There exists a quasiprojective variety $\mathcal{J}_{U_1}^T$ with a
 projective morphism $\pi_{U_1}^T:\mathcal{J}_{U_1}^T\rightarrow U_1$  which has the following
 properties.

 (1) The family $\mathcal{J}_{U_1}^T$ is \'etale locally isomorphic to
 $\mathcal{J}_{U_1}$ over $U_1$.

 (2) Let $f: U'\rightarrow U$ be a base change, with $U'$ smooth, and assume that there is
 a codimension $2$ cycle $\mathcal{Z}\in {\rm CH}^2(\mathcal{Y}_{U'})$ that has degree
 $1$ in the fibers of $\mathcal{Y}_{U'}\rightarrow U'$. Then there is a canonical section
 $U'\rightarrow \mathcal{J}_{U'}^T$ of the fibration $\mathcal{J}_{U'}^T :=f^*\mathcal{J}_{U}^T\rightarrow U'$. Equivalently,
 there is a morphism $\Phi_{\mathcal{Z}}:U'\rightarrow \mathcal{J}_{U}^T$ over $U$.
 \end{prop}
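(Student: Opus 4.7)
The plan is to construct $\mathcal{J}_{U_1}^T$ explicitly as a twist of $\mathcal{J}_{U_1}$ using the relative Fano scheme of lines, proving (1) and (2) simultaneously. The input for this construction is the Deligne cohomology description of $\mathcal{J}_U^T$ already recalled before the statement, combined with the extension of $\mathcal{J}_{U_1}$ across the $1$-nodal locus from \cite{lazasaccavoisin}.

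Step 1 (relative Abel-Jacobi and part (2)). Given $f:U'\to U$ and $\mathcal{Z}\in{\rm CH}^2(\mathcal{Y}_{U'})$ of relative degree $1$, the pointwise assignment $t\mapsto [\mathcal{Z}_t]\in J^T(Y_t)$ is holomorphic on $U'$ by the classical Griffiths construction for relative Abel-Jacobi maps on a smooth projective fibration, and is algebraic because the family of $1$-cycles $\{\mathcal{Z}_t\}_{t\in U'}$ is parametrized by a component of the relative Chow (or Hilbert) scheme of $\mathcal{Y}_{U'}/U'$ which maps algebraically to the abelian scheme $\mathcal{J}_U$ after fixing a reference cycle. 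This defines $\Phi_\mathcal{Z}:U'\to \mathcal{J}_U^T$ over $U$, with canonicity manifest from the pointwise description. This proves (2) once $\mathcal{J}_U^T$ is given an algebraic structure, which is determined by requiring the $\Phi_\mathcal{Z}$ to be morphisms.

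Step 2 (twist construction via the Fano scheme of lines). Let $p:\mathcal{F}\to U_1$ be the relative Fano scheme of lines in fibers of $\mathcal{Y}_{U_1}\to U_1$. The morphism $p$ is projective; for $t\in U$, $\mathcal{F}_t$ is the smooth Fano surface of a smooth cubic threefold, and for $t\in U_1\setminus U$ it is the Fano surface of a $1$-nodal cubic threefold, still a reduced irreducible projective surface. Thus $p$ is surjective and generically smooth along each fiber, hence admits étale-local sections in a neighborhood of any point of $U_1$. Each étale-local section $s:V\to \mathcal{F}|_V$ over an étale open $V\to U_1$ gives a universal line $\mathcal{L}_s\subset \mathcal{Y}_V$, a relative degree-$1$ codimension-$2$ cycle. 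By Step 1, $\mathcal{L}_s$ produces a section $\sigma_s:V\cap U\to \mathcal{J}_U^T|_{V\cap U}$, which trivializes the $\mathcal{J}_U$-torsor over $V\cap U$ by $x\mapsto x-\sigma_s$.

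Step 3 (gluing and part (1)). For two such sections $s,s'$ over $V\cap V'$, the transition between the trivializations is translation by ${\rm AJ}(\mathcal{L}_{s'}-\mathcal{L}_s)\in \mathcal{J}_{U_1}(V\cap V')$, a relative degree-$0$ $1$-cycle whose Abel-Jacobi class extends algebraically from $V\cap V'\cap U$ to all of $V\cap V'$ by the Mumford compactification / Néron model description of $\mathcal{J}_{U_1}$ recalled in the introduction. One then \emph{defines} $\mathcal{J}_{U_1}^T$ by gluing the trivial torsors $\mathcal{J}_V$ via these algebraic translations; the result is a quasiprojective variety with projective morphism $\pi_{U_1}^T:\mathcal{J}_{U_1}^T\to U_1$, étale-locally isomorphic to $\mathcal{J}_{U_1}$ by construction, proving (1). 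This $\mathcal{J}_{U_1}^T$ restricts over $U$ to a quasiprojective model of the set-theoretic Deligne-cohomology torsor $\mathcal{J}_U^T$, so the two definitions agree.

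The main obstacle is the last point in Step 3: verifying that the transition cocycles ${\rm AJ}(\mathcal{L}_{s'}-\mathcal{L}_s)$, naturally defined on the smooth locus $V\cap V'\cap U$, extend to scheme-theoretic translations of $\mathcal{J}_{U_1}$ over $V\cap V'$. This is precisely the content of the extension of the relative Abel-Jacobi map across the $1$-nodal fibers, and must be checked using the explicit Mumford-type extension of $\mathcal{J}_{U_1}$; once this is in place, descent along $p$ produces the desired quasiprojective structure and the proof of both (1) and (2) is complete.
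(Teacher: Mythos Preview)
Your approach is genuinely different from the paper's, and the difference is instructive. The paper does \emph{not} construct $\mathcal{J}_{U_1}^T$ by gluing trivializations coming from \'etale-local families of lines. Instead it produces a canonical $3$-torsion twisting class $t\in H^1_{et}(U_1,\,{}_3\mathcal{J}_{U_1}^{\circ})$ cohomologically: the generator $\sigma$ of $R^4u_{1*}\mathbb{Z}$ has $d_2\sigma$ of $3$-torsion in $H^2(U_1,R^3u_{1*}\mathbb{Z})$ (since $3\sigma$ lifts to $h_{\mathcal{Y}}^2$), and a canonical lift of $d_2\sigma$ to $H^1(U_1,R^3u_{1*}(\mathbb{Z}/3\mathbb{Z}))$ is obtained by passing to the universal family $W_1$ of all cubic threefolds, where $H^1(W_1,R^3v_{1*}\mathbb{Z})=0$. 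The twist is then performed using this finite-order class and the action of $\mathcal{J}_{U_1}^{\circ}$ on $\mathcal{J}_{U_1}$. Only afterwards does the paper bring in lines, proving that the class $t$ dies on the universal family $\mathcal{F}_1^{0,univ}$ of lines; part (2) for an arbitrary degree-$1$ cycle is then reduced to the case of lines via Shen's universal generation theorem for $1$-cycles on cubic hypersurfaces. Your route, by contrast, builds the torsor directly from the Fano scheme of lines, with transition cocycle $\mathrm{AJ}(\mathcal{L}_{s'}-\mathcal{L}_s)$ valued in sections of $\mathcal{J}_{U_1}^{\circ}$ rather than in its $3$-torsion.

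There is a real gap in your Step~3: you assert that the glued object is a quasiprojective variety with projective structural morphism, but you give no argument for this. Your transition functions are translations by \emph{arbitrary} sections of $\mathcal{J}_{U_1}^{\circ}$, and such translations do not preserve any relative polarization on $\mathcal{J}_{U_1}$; so \'etale descent gives you a priori only a proper algebraic space over $U_1$, and neither projectivity of $\pi_{U_1}^T$ nor even representability by a scheme is automatic. This is exactly what the paper's $3$-torsion construction buys: since the twisting class lies in $H^1_{et}$ of the finite \'etale group scheme ${}_3\mathcal{J}_{U_1}^{\circ}$, the twist trivializes after a finite \'etale cover, and quasi-projectivity descends. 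Your cocycle class in $H^1_{et}(U_1,\mathcal{J}_{U_1}^{\circ})$ is in fact the image of the paper's $3$-torsion class, so your torsor \emph{is} quasiprojective --- but you have not shown this, and it is the crux of the matter. A secondary issue is that your Step~1 treats part (2) for a general degree-$1$ cycle $\mathcal{Z}$ rather loosely (``after fixing a reference cycle''): there is no global reference cycle over $U$, which is precisely why the torsor is nontrivial; the paper closes this by invoking Shen's theorem to write $\mathcal{Z}$ in terms of lines over the function field of $U'$.
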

 More precisely, the morphism $\Phi_{\mathcal{Z}}$ is in fact compatible with the Abel-Jacobi map in the
 sense that for two points $u',\,u''\in U'$ with $f(u')=f(u'')=u$,
 $\Phi_{\mathcal{Z}}(u')=\Phi_{\mathcal{Z}}(u'')+\Phi_{\mathcal{Y}_u}(\mathcal{Z}_{u'}-\mathcal{Z}_{u''})$,
 where $\Phi_{\mathcal{Y}_u}$ is the Abel-Jacobi map of $\mathcal{Y}_u$. This last fact is automatic,
 due to the universal property  of the Abel-Jacobi map (see \cite{murre}).
 The proof of the proposition will be done through a few lemmas.

 To start with, we  observe that $\mathcal{J}_{U_1}$ contains a Zariski open set $\mathcal{J}_{U_1}^{\circ}$, which is a group scheme over $U_1$ and differs from $\mathcal{J}_{U_1}$  only over $U_1\setminus U$. Over $U_1\setminus U$, the fibers of $\mathcal{J}_{U_1}^{\circ}$ are quasiabelian varieties, and more precisely, the fiber $\mathcal{J}_{U_1,t}^{\circ}$ over
 $t\in U_1\setminus U$ is a $\mathbb{C}^*$-bundle over the intermediate
 Jacobian $J(\widetilde{\mathcal{Y}_t})$, where $\widetilde{\mathcal{Y}_t}$ is the desingularization of ${\mathcal{Y}_t}$ obtained by blowing-up the node.
 Denoting by $o_t$ the singular point of $\mathcal{Y}_t$,
  the fiber $\mathcal{J}_{U_1,t}^{\circ}$ can be understood in terms of algebraic cycles
 as the group of $1$-cycles homologous to $0$ in
 $\mathcal{Y}_t\setminus\{o_t\}$  modulo rational equivalence relative to $o_t$, that it
 modulo ${\rm div}\,\phi$, where $\phi$ is a rational function on $W\subset \mathcal{Y}_t$
 which is well defined and invertible near $o_t$ if $o_t\in W$ (see \cite{refsrivie}, \cite[Lecture 3]{bloch}).
 The fibers $\mathcal{J}_{U_1,t}$ of the compactified Jacobian fibration
 $\mathcal{J}_{U_1}$ are obtained as the Mumford compactification
 \cite{mumfordcomp}, obtained
 by replacing the $\mathbb{C}^*$-bundle mentioned above by the corresponding $\mathbb{P}^1$-bundle with the sections $0$ and $\infty$ glued via a translation
 of the base $J(\widetilde{\mathcal{Y}_t})$. The  translation is given by an element
 $\eta\in J(\widetilde{\mathcal{Y}_t})$ and the $\mathbb{C}^*$-bundle is given by its class
 $\eta'\in {\rm Pic}^0J(\widetilde{\mathcal{Y}_t})$ ; the two classes $\eta$ and $\eta'$ must be identified via the (principal) $\Theta$-divisor of $J(\widetilde{\mathcal{Y}_t})$. This is indeed the condition that this compactification  admits an ample divisor which is the limit of the Theta-divisors on the smooth fibers $J(\mathcal{Y}_s)$. (The reader will find in \cite{bardelli}, \cite{collinobardelli} the interpretation of this class.)
 Notice that the (sheaf of holomorphic sections of) the
analytic  group scheme $\mathcal{J}_{U_1}^{\circ}$ is given by the formula
\begin{eqnarray}\label{eqformulaju10}
\mathcal{J}_{U_1}^{\circ}=\mathcal{H}^{1,2}/R^3u_{1*}\mathcal{Z},
\end{eqnarray}
 where $u_1:\mathcal{Y}_{U_1}\rightarrow U_1$ is the universal hypersurface over $u_1$, (thus, by Picard-Lefschetz theory,  $R^3u_{1*}\mathcal{Z}=j_{1*}H^3_\mathbb{Z}$ is the natural extension of the local system $H^3_\mathbb{Z}=R^3u_{*}\mathcal{Z}$ existing on $U$,) and $\mathcal{H}^{1,2}$ is the Deligne
 extension of the Hodge bundle $$\mathcal{H}^{1,2}_U=R^1u_*\Omega_{\mathcal{Y}_U/U}^2$$ existing on $U$.
 Let us prove the following lemma:
 \begin{lemm} \label{leaction} The group scheme $\mathcal{J}_{U_1}^{\circ}$ acts (over $U_1$) on the compactification
 $\mathcal{J}_{U_1}$.
 \end{lemm}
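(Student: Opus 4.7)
My plan is to verify the action fiberwise on the compactified Jacobians and then assemble the fiberwise pieces into an algebraic action over $U_1$. Over $U$ there is nothing to prove, since $\mathcal{J}_{U_1}^{\circ}|_U = \mathcal{J}_{U_1}|_U = \mathcal{J}_U$ is a smooth abelian fibration acting on itself by its group law. The content is thus entirely along the discriminant divisor $D := U_1 \setminus U$.

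Fix $t \in D$ and write $G := \mathcal{J}_{U_1,t}^{\circ}$. By the discussion just before the lemma, $G$ is a semiabelian variety sitting in an extension $0 \to \mathbb{C}^{*} \to G \to A \to 0$ with $A = J(\widetilde{\mathcal{Y}}_t)$, of class $\eta' \in \mathrm{Pic}^{0}(A)$ represented by a line bundle $L$, and $\mathcal{J}_{U_1,t}$ is the normal projective variety obtained from $\overline{L} := \mathbb{P}(L \oplus \mathcal{O}_A)$ by identifying its $0$- and $\infty$-sections via the translation $t_\eta$ on $A$. The self-translation action of $G$ extends to an action on $\overline{L}$ by combining the standard $\mathbb{C}^{*}$-action on each $\mathbb{P}^1$-fiber, which fixes $0$ and $\infty$, with translation on $A$, covered on $L$ by the canonical isomorphism $L \cong t_a^{*}L$ arising from $L \in \mathrm{Pic}^{0}(A)$. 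This extended action preserves the $0$- and $\infty$-sections of $\overline{L}$ setwise and commutes with the translation $t_\eta$, so it descends to an action of $G$ on $\mathcal{J}_{U_1,t}$.

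To promote this fiberwise picture to a morphism over $U_1$, I would work in an \'etale (or analytic) neighborhood $\Delta$ of $t$ in $U_1$: $\mathcal{J}_{U_1}|_\Delta$ admits a relative Mumford description built from the Deligne extension $\mathcal{H}^{1,2}$, the local system $R^3u_{1*}\mathbb{Z}$ and the relative theta polarization, and the formulas for the action above are uniform in the parameter. This yields a morphism $\mu_\Delta : \mathcal{J}_{U_1}^{\circ} \times_{U_1} \mathcal{J}_{U_1} \to \mathcal{J}_{U_1}$ over $\Delta$. Since $\mu_\Delta$ agrees with the group law on the Zariski-dense open $(\mathcal{J}_{U_1}^{\circ} \times_{U_1} \mathcal{J}_{U_1}^{\circ})|_\Delta$ and the target is separated over $U_1$, these local constructions glue to a global morphism $\mu$. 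The main technical obstacle is ensuring that the relative Mumford data (the relative line bundle and the relative gluing translation) degenerate coherently across $D$ so that the fiberwise formula really assembles into a morphism of schemes; this is essentially a Picard--Lefschetz bookkeeping of the same kind as the one carried out in \cite{lazasaccavoisin}.
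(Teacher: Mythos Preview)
Your approach is sound but takes a genuinely different route from the paper. You construct the action by hand on each Mumford-compactified fiber and then argue that these fiberwise actions glue \'etale-locally over $U_1$; the paper instead gives a global modular argument. Using the principal polarization, the paper identifies $\mathcal{J}_U$ with $\mathrm{Pic}^0(\mathcal{J}_U/U)$ via the relative Theta divisor, so that $\mathcal{J}_{U_1}^{\circ}$ becomes $\mathrm{Pic}^0(\mathcal{J}_{U_1}/U_1)$, and then observes that the Mumford compactification $\mathcal{J}_{U_1}$ coincides with the compactified relative Picard scheme parametrizing rank~$1$ torsion-free sheaves on the fibers of $\pi_{U_1}$. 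The action of $\mathcal{J}_{U_1}^{\circ}$ on $\mathcal{J}_{U_1}$ is then simply the tensor product action of $\mathrm{Pic}^0$ on torsion-free sheaves, which is manifestly algebraic and global over $U_1$ from the outset. This bypasses entirely the ``main technical obstacle'' you flag---the coherent degeneration of the relative Mumford data across the discriminant---since no local-to-global gluing is needed. Your direct approach has the virtue of making the action explicit on the geometric model of the degenerate fibers, but the paper's autoduality trick is both shorter and avoids the bookkeeping you would otherwise have to import from \cite{lazasaccavoisin}.
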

 \begin{proof} One easy way to prove this is to observe that we have a family $\pi_U: \mathcal{J}_U\rightarrow U$ of principally polarized varieties with a canonical Theta divisor
 $\Theta_U \subset \mathcal{J}_U$ inducing a relative  isomorphism
 $$\mathcal{J}_U\cong {\rm Pic}^0(\mathcal{J}_U/U).$$
 then $\mathcal{J}_{U_1}^{\circ}$ identifies to ${\rm Pic}^0(\mathcal{J}_{U_1/U_1})$ and by uniqueness,
 the compactification $\mathcal{J}_{U_1}$ identifies with the N\'eron compactification
 of ${\rm Pic}^0(\mathcal{J}_{U_1/U_1})$ parameterizing rank $1$ torsion free sheaves
 on fibers of $\pi_{U_1}:\mathcal{J}_{U_1}\rightarrow U_1$.
 The action over $U_1$ of $\mathcal{J}_{U_1}^{\circ}$ on $\mathcal{J}_{U_1}$ is then the action
 of ${\rm Pic}^0$ on sheaves by
tensor product.
 \end{proof}
 Formula (\ref{eqformulaju10}) shows that $R^3u_{1*}(\mathbb{Z}/3\mathbb{Z})\subset \mathcal{J}_{U_1}^{\circ}$ identifies with the
 sheaf $_3\mathcal{J}_{U_1}^{\circ}$ of $3$-torsion points and Lemma \ref{leaction} shows that
 $\mathcal{J}_{U_1}^{\circ}\subset \mathcal{A}ut\,(\mathcal{J}_{U_1}/U_1)$.
 Let us now exhibit the twisting class in $H^1_{et}(U_1,\mathcal{A}ut\,(\mathcal{J}_{U_1}/U_1))$ needed to construct $\mathcal{J}_{U_1}^{T}$. The twisting class will be of $3$-torsion
 and more precisely will come
 from a class in $H^1_{et}(U_1,\,\,_3\mathcal{J}_{U_1}^{\circ})=H^1(U_1,\,\,_3\mathcal{J}_{U_1}^{\circ})$, where in the right hand side we consider  cohomology with respect to the usual
 topology.
 Consider the morphism
 $u_1:\mathcal{Y}_{u_1}\rightarrow U_1$.
 As they admit at worst one node, the fibers of $u_1$ satisfy $H^4(\mathcal{Y}_t,\mathbb{Z})=\mathbb{Z}$ with
generator at the point $t$ the class of a line (not passing through the node of
$\mathcal{Y}_t$ if $\mathcal{Y}_t$ is singular). Let $h=c_1(\mathcal{O}_X(1)$ and $h_{\mathcal{Y}}\in H^2(\mathcal{Y},\mathbb{Z})$ be its pull-back to $\mathcal{Y}$.
The image $\widetilde{h_\mathcal{Y}^2}$ of the class $h_\mathcal{Y}^2$ in $H^0(U_1,R^4u_{1*}\mathbb{Z})$ is equal
to $3$ times the above generator of  $R^4u_{1*}\mathbb{Z}=\mathbb{Z}$. The twisting class
comes from the lack of degeneracy at $E_2$ of the Leray spectral sequence of $u_1$ with
integral coefficients.

 \begin{lemm}\label{lepourt} Let $\sigma\in H^0(U_1,R^4u_{1*}\mathbb{Z})$ be the natural generator. Then
 the image $d_2\sigma\in H^2(U_1,R^3u_{1*}\mathbb{Z})$ is of $3$-torsion, and comes from a
 canonically defined  element $$t\in H^1(U_1,R^3u_{1*}(\mathbb{Z}/3\mathbb{Z})).$$
 \end{lemm}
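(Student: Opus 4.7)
The plan has three stages. For the first claim, that $d_2 \sigma$ is $3$-torsion, I would argue as follows. The class $h_{\mathcal{Y}}^2 \in H^4(\mathcal{Y}, \mathbb{Z})$ is pulled back from the hyperplane class on $X$, so its restriction to $\mathcal{Y}_{U_1}$ is globally defined on the total space. Any globally defined class is a permanent cycle for the Leray spectral sequence $E_2^{p,q} = H^p(U_1, R^q u_{1*}\mathbb{Z}) \Rightarrow H^{p+q}(\mathcal{Y}_{U_1}, \mathbb{Z})$; in particular $d_2(\widetilde{h_{\mathcal{Y}}^2}) = 0$. Since $\widetilde{h_{\mathcal{Y}}^2} = 3\sigma$ by the discussion preceding the lemma, we deduce $3 \, d_2 \sigma = 0$.

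For the existence of a lift $t$, I would first set up the relevant Bockstein exact sequence. Applying $R u_{1*}$ to the coefficient sequence $0 \to \mathbb{Z} \xrightarrow{\times 3} \mathbb{Z} \to \mathbb{Z}/3\mathbb{Z} \to 0$ on $\mathcal{Y}_{U_1}$, and invoking the fact that $R^3 u_{1*}\mathbb{Z}$ and $R^4 u_{1*}\mathbb{Z} \cong \mathbb{Z}$ are torsion-free (the former because $H^3$ of a smooth or $1$-nodal cubic threefold is torsion-free), I obtain the short exact sequence
\[
0 \to R^3 u_{1*}\mathbb{Z} \xrightarrow{\times 3} R^3 u_{1*}\mathbb{Z} \to R^3 u_{1*}(\mathbb{Z}/3\mathbb{Z}) \to 0
\]
of sheaves on $U_1$. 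The associated Bockstein connecting map $\delta : H^1(U_1, R^3 u_{1*}(\mathbb{Z}/3\mathbb{Z})) \to H^2(U_1, R^3 u_{1*}\mathbb{Z})$ then surjects onto the $3$-torsion subgroup of its target, placing $d_2 \sigma$ in its image.

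For canonicity, I would exploit the naturality of the Leray spectral sequence under reduction modulo $3$. The reduction $\overline{h_{\mathcal{Y}}^2} \in H^4(\mathcal{Y}_{U_1}, \mathbb{Z}/3\mathbb{Z})$ projects to $3\sigma \bmod 3 = 0$ in $H^0(U_1, R^4 u_{1*}(\mathbb{Z}/3\mathbb{Z}))$, so it lies in the first step of the Leray filtration and has a canonical image in the graded piece $L^1/L^2 \cong E_\infty^{1,3}$, which is a subquotient of $H^1(U_1, R^3 u_{1*}(\mathbb{Z}/3\mathbb{Z}))$. My plan is to show that this canonical graded class promotes to a distinguished element $t$ in $H^1(U_1, R^3 u_{1*}(\mathbb{Z}/3\mathbb{Z}))$ satisfying $\delta(t) = d_2 \sigma$, the lift being pinned down by comparing the $\mathbb{Z}$- and $\mathbb{Z}/3$-coefficient Leray spectral sequences together with the integral lift $h_{\mathcal{Y}}^2$ of $3 \sigma$.

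The main obstacle is precisely this canonicity step: a priori, the ambiguity in choosing a Bockstein preimage of $d_2 \sigma$ lies in the image of $H^1(U_1, R^3 u_{1*}\mathbb{Z}) \to H^1(U_1, R^3 u_{1*}(\mathbb{Z}/3\mathbb{Z}))$, and one must use the existence of the global integral class $h_{\mathcal{Y}}^2$ to rigidify $t$. I expect this to be achieved either by an explicit Čech $1$-cocycle construction built from étale-local integral lifts $\tilde\sigma_i$ of $\sigma$ normalized against $h_{\mathcal{Y}}^2$ modulo $3$ (so that the cocycle $\tilde\sigma_i - \tilde\sigma_j$ is killed in $E_\infty^{1,3}$ of the integral spectral sequence but not mod $3$), or by a diagram chase applied to the morphism of long exact sequences of low-degree terms of the two Leray spectral sequences.
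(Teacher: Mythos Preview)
Your first two steps---the $3$-torsion of $d_2\sigma$ via the global lift $h_{\mathcal Y}^2$ of $3\sigma$, and the Bockstein exact sequence coming from torsion-freeness of $R^3u_{1*}\mathbb Z$ and $R^4u_{1*}\mathbb Z$---are exactly what the paper does. The divergence is in the canonicity step, and here the paper takes a completely different and much cleaner route than the one you sketch.

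The paper does \emph{not} try to rigidify the lift on $U_1$ using $h_{\mathcal Y}^2$. Instead it observes that the only obstruction to canonicity is the non-vanishing of $H^1(U_1,R^3u_{1*}\mathbb Z)$ (which is non-zero precisely because $H^4(X,\mathbb Z)$ is non-trivial), and then \emph{changes the base} to the universal family $v_1:\mathcal Y^{univ}_{W_1}\to W_1$ of all at-worst-$1$-nodal cubic threefolds in hyperplanes of $\mathbb P^5$. A short sublemma shows $H^1(W_1,R^3v_{1*}\mathbb Z)=0$ (first with $\mathbb Q$-coefficients by degeneration of Leray and the simple algebraic structure of $H^4$ of the universal total space, then integrally using $H^0(W_1,R^3v_{1*}(\mathbb Q/\mathbb Z))=0$). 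Hence the Bockstein lift of $d_2\sigma^{univ}$ is \emph{unique} on $W_1$, giving a canonical $t^{univ}$, and one simply defines $t$ as its pull-back along the classifying map $g:U_1\to W_1$. This bypasses entirely the compatibility issues you raise.

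Regarding your own approach: it is a reasonable idea and may well work, but two points deserve attention. First, for the position $(1,3)$ in a first-quadrant spectral sequence there are no incoming differentials, so $E_\infty^{1,3}(\mathbb Z/3)$ is a \emph{subgroup} of $E_2^{1,3}=H^1(U_1,R^3u_{1*}(\mathbb Z/3\mathbb Z))$, not merely a subquotient; thus the image of $\overline{h_{\mathcal Y}^2}$ there is already an honest element of $H^1$, and no ``promotion'' is needed. Second, the crux is then the compatibility $\delta(t)=d_2\sigma$ between the sheaf Bockstein and the Leray $d_2$, which you correctly identify as the main obstacle but do not prove; your two suggested mechanisms (\v Cech normalization or a diagram chase between the $\mathbb Z$- and $\mathbb Z/3$-coefficient low-degree exact sequences) are plausible but remain to be carried out. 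The paper's universal-family trick avoids this verification altogether, at the price of one extra vanishing computation, and has the side benefit that the resulting $\mathcal J_{U_1}^T$ is visibly the pull-back of a universal object over $W_1$---a fact used later in the paper.
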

 \begin{proof} The class $d_2\sigma\in H^2(U_1,R^3u_{1*}\mathbb{Z})$ is of $3$-torsion because $3\sigma=\widetilde{h_\mathcal{Y}^2}$ is the image of $h_{\mathcal{Y}}^2\in H^4(\mathcal{Y}_{U_1},\mathbb{Z})$ in $H^0(U_1,R^4u_{1*}\mathbb{Z})$.
 The exact sequence
  $$0\rightarrow \mathbb{Z}\stackrel{3}{\rightarrow}\mathbb{Z}\rightarrow\mathbb{Z}/3\mathbb{Z}
 \rightarrow0$$
 of constant
 sheaves  on $\mathcal{Y}_{U_1}$
 induces the exact sequence
 \begin{eqnarray}\label{eqexlongue} 0\rightarrow R^3u_{1*}\mathbb{Z}\stackrel{3}{\rightarrow}R^3u_{1*}\mathbb{Z}\rightarrow
 R^3u_{1*}(\mathbb{Z}/3\mathbb{Z})\rightarrow 0
 \end{eqnarray}
 because $R^3u_{1*}\mathbb{Z}$ and $R^4u_{1*}\mathbb{Z}$ have no torsion.
 The long exact sequence associated to (\ref{eqexlongue}) thus shows that
 $d_2\sigma$ lifts to an element of
 $H^1(U_1,R^3u_{1*}(\mathbb{Z}/3\mathbb{Z}))=H^1(U_1,\,_3\mathcal{J}_{U_1}^\circ)$.
This does not prove however that this lift is canonical since  in the considered long exact sequence of Betti cohomology, there
is a nontrivial cohomology group
$H^1(U_1,R^3u_{1*}\mathbb{Z})$ (this is due to the non-triviality of
$H^4(X,\mathbb{Z})$). We can however go around this difficulty
by considering the universal situation where instead of considering
the universal family $\mathcal{Y}_{U_1}$ of smooth or
$1$-nodal  hyperplane sections of the given cubic fourfold $X$, we consider
the universal family $v_{1}:\mathcal{Y}^{univ}_{W_1}\rightarrow W_1$ of smooth or
$1$-nodal cubic threefolds contained in some hyperplane   $\mathbb{P}^4\subset \mathbb{P}^5$.
Here the whole parameter space $B_{univ}$ is the projective bundle
over $(\mathbb{P}^5)^\vee$ with fiber over $[H]$ the projective space
$\mathbb{P}(H^0(\mathcal{O}_H(3)))$ and $W_1\subset B_{univ}$ is a Zariski open set in it.
In this case, we have
\begin{sublemma} One has $H^1(W_1, R^3v_{1*}\mathbb{Z})=0$.
\end{sublemma}
\begin{proof} One proves the result first of all with $\mathbb{Q}$-coefficients, using the fact that
the Leray spectral sequence degenerates in $E_2$ and that the degree $4$ cohomology of
$\mathcal{Y}^{univ}_{W_1}$ is algebraic and very simple.  Then one concludes using the exact sequence
on $W_1$
\begin{eqnarray}\label{eqexlonguetors} 0\rightarrow R^3v_{1*}\mathbb{Z}{\rightarrow}R^3v_{1*}\mathbb{Q}\rightarrow
 R^3v_{1*}(\mathbb{Q}/\mathbb{Z})\rightarrow 0
 \end{eqnarray}
 and the fact that $H^0(W_1,R^3v_{1*}(\mathbb{Q}/\mathbb{Z}))=0$.
\end{proof}
This fact gives us a canonical  lift of $d_2\sigma^{univ}$ to an element $t^{univ}$ of
 $H^1(W_1,R^3v_{1*}(\mathbb{Z}/3\mathbb{Z}))$ hence by pull-back to
 $U_1$ (using the natural map $g:U_1\rightarrow W_1$), the desired  canonical  lift $t$ of $d_2\sigma$ to an element of
 $H^1(U_1,R^3u_{1*}(\mathbb{Z}/3\mathbb{Z}))$.
  \end{proof}
Lemma \ref{lepourt} gives us a class in $H^1(U_1,\,_3\mathcal{J}_{U_1})=H^1_{et}(U_1,\,_3\mathcal{J}_{U_1})$ which by Lemma
\ref{leaction} allows us to construct a twisted family
$\pi_{U_1}:\mathcal{J}_{U_1}^T\rightarrow U_1$, which is above
$U$  a torsor over the group scheme $\mathcal{J}_U$.
Note that the proof of Lemma \ref{lepourt} also shows that $\mathcal{J}_{U_1}^T$ is the pull-back
under $g_1:U_1\rightarrow W_1$ of the corresponding object $\mathcal{J}_{W_1}^{T,univ}$ over
$W_1$.
We now prove that the torsor so constructed is actually the object we want, namely, the target
of the Abel-Jacobi map for $1$-cycles of degree $1$ along the smooth fibers of $u$.
This will follow from the following result: Let as before $W_1\subset B_{univ}$ be the Zariski open set  parameterizing  cubic threefolds
in $\mathbb{P}^5$ with at most one ordinary double point. Let $ o_1: \mathcal{F}_1^{univ}\rightarrow W_1$ be the family of lines in the fibers of
$v_1:\mathcal{Y}_{W_1}^{univ}\rightarrow W_1$ and $\mathcal{F}_1^{0,univ}\subset \mathcal{F}_1^{univ}$ be the Zariski open set
consisting of lines not passing through the node when the corresponding cubic threefold
is singular. Let $o_{1,0}$ be the restriction of $o_1$ to $\mathcal{F}_1^0$.
\begin{lemm} \label{letrivdroite1}
There exists an isomorphism
\begin{eqnarray}
\label{eqisodu14} o_{1,0}^*\mathcal{J}_{W_1}^{univ}\cong o_{1,0}^*\mathcal{J}_{W_1}^{T,univ}
\end{eqnarray}
of  quasi-projective varieties over
$\mathcal{F}_1^{0,univ}$.
\end{lemm}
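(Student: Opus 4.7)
The idea is to apply Proposition \ref{proptwist}(2) to the universal line, producing a global section of $o_{1,0}^*\mathcal{J}_{W_1}^{T,univ}$, and then to promote this section to an isomorphism (\ref{eqisodu14}) by means of the action of Lemma \ref{leaction}. Concretely, on $\mathcal{F}_1^{0,univ}$ the tautological line yields a codimension two cycle $\mathcal{L}\subset o_{1,0}^*\mathcal{Y}^{univ}_{W_1}$, flat of relative degree $1$ over the base and, by the very definition of $\mathcal{F}_1^{0,univ}$, contained in the smooth locus of each fiber. Letting $\mathcal{F}_{1,sm}^{0,univ}:=o_1^{-1}(W)\cap \mathcal{F}_1^{0,univ}$ be the preimage of the locus $W\subset W_1$ of smooth cubic threefolds, Proposition \ref{proptwist}(2) applied to $\mathcal{Z}=[\mathcal{L}]$ over $\mathcal{F}_{1,sm}^{0,univ}$ furnishes an Abel-Jacobi morphism $\Phi_{\mathcal{L}}:\mathcal{F}_{1,sm}^{0,univ}\to \mathcal{J}_{W}^{T,univ}$ over $W$, equivalently a section $s$ of $o_{1,0}^*\mathcal{J}_{W_1}^{T,univ}$ above $\mathcal{F}_{1,sm}^{0,univ}$.

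The key step is then to extend $s$ across the discriminant to all of $\mathcal{F}_1^{0,univ}$. For this one invokes the cycle-theoretic description, recalled just before Lemma \ref{leaction}, of the quasi-abelian fiber $\mathcal{J}^\circ_{W_1,t}$ for $t\in W_1\setminus W$ as the group of $1$-cycles of the appropriate homology class in $\mathcal{Y}^{univ}_t\setminus\{o_t\}$ modulo rational equivalence relative to the node $o_t$. Since a line parameterized by $\mathcal{F}_1^{0,univ}$ avoids $o_t$ by construction, the class of such a line remains well defined in this group, so the Abel-Jacobi image extends continuously to $1$-nodal fibers and lands in the open part. Combined with the properness of $o_{1,0}^*\mathcal{J}_{W_1}^{T,univ}\to\mathcal{F}_1^{0,univ}$ and the smoothness of the base, this extends $s$ uniquely to a section of $o_{1,0}^*\mathcal{J}_{W_1}^{\circ,T,univ}\subset o_{1,0}^*\mathcal{J}_{W_1}^{T,univ}$ over the whole of $\mathcal{F}_1^{0,univ}$.

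Such a global section of the $o_{1,0}^*\mathcal{J}_{W_1}^{\circ}$-torsor $o_{1,0}^*\mathcal{J}_{W_1}^{\circ,T,univ}$ trivializes the pull-back to $\mathcal{F}_1^{0,univ}$ of the twisting class $t\in H^1(W_1,{}_3\mathcal{J}_{W_1}^\circ)$ from Lemma \ref{lepourt}; the action of Lemma \ref{leaction} then turns translation by $s$ into an isomorphism $o_{1,0}^*\mathcal{J}_{W_1}^{univ}\cong o_{1,0}^*\mathcal{J}_{W_1}^{T,univ}$ of quasi-projective families over $\mathcal{F}_1^{0,univ}$. The main obstacle to clear is exactly the extension step above: one has to verify that the Abel-Jacobi section constructed over the smooth locus extends across the $1$-nodal divisor and lands in the open quasi-abelian part rather than in the Mumford boundary, which is what makes the trivialization of the twist possible and is the reason for restricting attention to lines avoiding the node.
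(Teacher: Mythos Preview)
Your proposal is circular as written: you invoke Proposition \ref{proptwist}(2), but in the paper's logical order that proposition is proved \emph{using} Lemma \ref{letrivdroite1} (the proof of Proposition \ref{proptwist} opens with ``However, Lemma \ref{letrivdroite1} shows\ldots''). At this stage of the argument, $\mathcal{J}_{W_1}^{T,univ}$ has only been constructed as the twist of $\mathcal{J}_{W_1}^{univ}$ by the cohomology class $t\in H^1(W_1,{}_3\mathcal{J}_{W_1}^\circ)$ of Lemma \ref{lepourt}; the interpretation of $\mathcal{J}^T$ as the target of an Abel-Jacobi map for degree-$1$ cycles has not yet been established and is precisely what Lemma \ref{letrivdroite1} is there to justify. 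So ``the universal line gives a section of $o_{1,0}^*\mathcal{J}^{T,univ}$'' is the conclusion one is after, not a tool available to prove it.

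The paper's argument is instead purely cohomological and works directly over all of $\mathcal{F}_1^{0,univ}$, so the extension step you single out as ``the main obstacle'' never arises. One shows $o_{1,0}^*(t)=0$ in $H^1(\mathcal{F}_1^{0,univ},H^3_{\mathbb{Z}/3\mathbb{Z}})$ as follows. The universal line $\Delta^{univ}\subset \mathcal{Y}^{univ}_{\mathcal{F}_1^{0,univ}}$ gives a class $[\Delta^{univ}]\in H^4(\mathcal{Y}^{univ}_{\mathcal{F}_1^{0,univ}},\mathbb{Z})$ restricting fiberwise to the generator $o_{1,0}^*\sigma$ of $R^4v'_{1*}\mathbb{Z}$, hence $d_2(o_{1,0}^*\sigma)=0$ in $H^2(\mathcal{F}_1^{0,univ},H^3_{\mathbb{Z}})$. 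By the long exact sequence associated with $0\to H^3_{\mathbb{Z}}\stackrel{3}{\to}H^3_{\mathbb{Z}}\to H^3_{\mathbb{Z}/3\mathbb{Z}}\to 0$, the class $o_{1,0}^*(t)$ then lies in the image of $H^1(\mathcal{F}_1^{0,univ},H^3_{\mathbb{Z}})$, and one concludes by proving $H^1(\mathcal{F}_1^{0,univ},H^3_{\mathbb{Z}})=0$ via the same method as in the Sublemma. Your geometric intuition that the universal line is what trivializes the twist is exactly right, but it enters the proof as a cohomology class lifting $\sigma$, not via an Abel-Jacobi section.
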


\begin{proof}
It suffices  to show that the pull-back $o_{1,0}^*(t)$ vanishes in
$H^1(\mathcal{F}_{1}^{0,univ},o_{1,0}^*(_3\mathcal{J}_{W_1}^{univ}))$.
Note that the morphism $o_{1,0}$ is smooth, because
the family of lines in the smooth locus of a cubic hypersurface is smooth. It follows
 that the total space of the  universal family
$$\mathcal{Y}^{univ}_{\mathcal{F}_1^{0,univ}}:=
\mathcal{Y}^{univ}_{W_1}\times_{W_1}\mathcal{F}_{1}^{0,univ}$$
is smooth. Let $v'_1:\mathcal{Y}^{univ}_{\mathcal{F}_1^{0,univ}}\rightarrow \mathcal{F}_1^{0,univ}$ be the natural morphism.
 Let $H^4_{\mathbb{Z}},\,H^3_{\mathbb{Z}},\,H^3_{\mathbb{Z}/3\mathbb{Z}}$ be respectively the sheaves
 $$R^4v'_{1*} \mathbb{Z}=o_{1,0}^*(R^4v_{1*} \mathbb{Z}),\,\,  R^3v'_{1*} \mathbb{Z}=o_{1,0}^*(R^3v_{1*} \mathbb{Z}), \,\, R^3v'_{1*} (\mathbb{Z}/3\mathbb{Z})=o_{1,0}^*(R^3v_{1*}(\mathbb{Z}/3\mathbb{Z}))$$ on
$\mathcal{F}_1^{0,univ}$.
The class $$o_{1,0}^*\sigma\in H^0(\mathcal{F}_1^{0,univ},o_{1,0}^*(R^4v_{1*} \mathbb{Z}))=H^0(\mathcal{F}_1^{0,univ},H^4_{\mathbb{Z}})$$ comes from a class in $H^4(\mathcal{Y}^{univ}_{\mathcal{F}_1^{0,univ}},\mathbb{Z})$, namely the class $[\Delta^{univ}]$ of the universal line
$\Delta^{univ}\subset \mathcal{Y}^{univ}_{\mathcal{F}_{1}^{0,univ}}$.
 We thus get that $d_2(o_{1,0}^*\sigma)=0$ in $H^2(\mathcal{F}_1^{0,univ},H^3_{\mathbb{Z}})$.
  This means that  the class
$$o_{1,0}^*(t)\in H^1(\mathcal{F}_1^{0,univ}, H^3_{\mathbb{Z}/3\mathbb{Z}})=
H^1(\mathcal{F}_1^{0,univ},\,\,_3\mathcal{J}^{univ}_{\mathcal{F}_1^0})$$ vanishes in
$H^2(\mathcal{F}_1^{0,univ}, H^3_{\mathbb{Z}})$ (using as before the long exact sequence
associated to the  short exact sequence
$$0\rightarrow H^3_{\mathbb{Z}}\rightarrow H^3_{\mathbb{Z}}\rightarrow H^3_{\mathbb{Z}/3\mathbb{Z}}\rightarrow0.$$
In order to prove that $o_{1,0}^*(t)$ vanishes in $H^1(\mathcal{F}_1^{0,univ},\,\,_3\mathcal{J}^{univ}_{\mathcal{F}_1^0})$, it  thus suffices to show that $H^1(\mathcal{F}_W, o^*H^3_{\mathbb{Z}})=0$, which is done exactly as before.
\end{proof}
\begin{rema}{\rm The isomorphism between the pullbacks to
$\mathcal{F}_1^{0,univ}$ of the twisted and the untwisted families
given in Lemma \ref{letrivdroite1} is not canonical since over $\mathcal{F}_W^{univ}$ the
intermediate Jacobian fibration has a nonzero section. Indeed, for each cubic smooth cubic threefold
$Y$ with a line $\Delta\subset Y$, the $1$-cycle
$3\Delta-h^2$ is cohomologous to zero on $Y$, hence has a  nontrivial Abel-Jacobi invariant.
This provides a nontrivial section which acts by translation on $o^*\mathcal{J}_{W}^{univ}$ over
$\mathcal{F}_W$. One can show that the isomorphism is unique up to the action of the group generated by this  translation.}
\end{rema}
\begin{proof}[Proof of Proposition \ref{proptwist}] We already constructed $\mathcal{J}_{U_1}^T$ as
the pull-back via $g_1:U_1\rightarrow W_1$ of the twisted family
$\mathcal{J}_{W_1}^T$. That it is \'etale locally isomorphic to $\mathcal{J}_{U_1}$ over $U_1$ is by construction. The only thing to prove is point (2).
However, Lemma \ref{letrivdroite1} shows that for any base change
morphism $f: U'\rightarrow U$,  assuming  there is
 a family of lines  $\Delta_{U'}\subset \mathcal{Y}_{U'}$, there is a canonical morphism
 $\Phi_{\Delta}:U'\rightarrow \mathcal{J}_{U}^T$ over $U$. Indeed, the data above give a morphism
 $c: U'\rightarrow \mathcal{F}_W^{univ}$ so we can use $\Phi_{\Delta}^{univ}$ of the previous lemma, and define $\Phi_\Delta:=\Phi_{\Delta^{univ}}\circ c$.
 To see that a family of $1$-cycles of degree $1$ in the fibers of
 $\mathcal{Y}_{U'}\rightarrow U'$ suffices, we can use the universal generation
 result of \cite{shen} which says that
 any $1$-cycle on a smooth  cubic hypersurface $Y$ over a field $K$ comes from a $0$-cycle
 on the surface of lines $F(Y)$, also defined over $K$. In our case, $K$ is the function field of
 $U'$. Using this result,  we get a correspondence over $U'$ between
 $U'$ and $\mathcal{F}_{U'}$, which has to be of degree $1$. Using this correspondence and the previous step, we construct the section $U'\rightarrow \mathcal{J}_{U'}^T$.
 \end{proof}
\section{Descent \label{sec4}}
Our goal in this section is to explain how to mimic the arguments from
\cite{lazasaccavoisin} in order to construct for a general cubic fourfold
 $X$ a hyper-K\"ahler compactification of the twisted
intermediate Jacobian fibration
$\mathcal{J}_{U_1}^T$ constructed in the previous section.
Recall the notion of very good line in a cubic threefold $Y$, introduced and used in \cite{lazasaccavoisin} (see also \cite{CML} for  a  slightly weaker notion): A line $\Delta\subset Y$ is very good if $\Delta$ does not pass through
the singular point of $Y$ (equivalently the surface of lines $F(Y)$ is smooth
at $[\Delta]$), the
curve $\widetilde{C}_\Delta$ of lines in $Y$  meeting $\Delta$ is irreducible and the natural involution acting on $\widetilde{C}_\Delta$ has no fixed points.
One of the results proved in \cite{lazasaccavoisin} (improving previous
results of \cite{CML}) is:
\begin{prop} \label{prolsv} If $X$ is a general cubic $4$-fold, and $Y\subset X$ is any hyperplane
section, $Y$ has a very good line.
\end{prop}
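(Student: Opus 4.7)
The plan is to fix a smooth cubic fourfold $X$ and introduce the relative Fano scheme of lines $\mathcal{F}(X) \subset B \times \mathbb{G}(1,\mathbb{P}^5)$ parametrizing pairs $(t, [\Delta])$ with $\Delta \subset \mathcal{Y}_t$. The three defining conditions for $\Delta$ to be very good — avoiding the singular locus of $\mathcal{Y}_t$, irreducibility of the incidence curve $\widetilde{C}_\Delta$, and fixed-point-freeness of the natural involution on $\widetilde{C}_\Delta$ — are each open in $\mathcal{F}(X)$: irreducibility is open because the number of geometric components in a flat family of curves is upper semi-continuous, and fixed-point-freeness is open because the involution acts on a flat family. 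Hence $\mathcal{F}(X)^{vg}$ is open in $\mathcal{F}(X)$, and its image $B^{vg}(X) \subset B$ is constructible. The goal is to show $B^{vg}(X) = B$ for general $X$.

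Over the open locus $U \subset B$ of smooth hyperplane sections, $F(\mathcal{Y}_t)$ is the classical smooth irreducible Fano surface of lines on a smooth cubic threefold, and I would check that for a generic $[\Delta] \in F(\mathcal{Y}_t)$ the incidence curve $\widetilde{C}_\Delta$ is smooth and irreducible (by a standard computation going back to Clemens--Griffiths) and the involution is fixed-point-free (a fixed point corresponds to a line meeting $\Delta$ whose residual in the hyperplane spanned by the two lines gives an additional coincidence, a codimension one condition in $F(\mathcal{Y}_t)$). This yields $U \subset B^{vg}(X)$, so the remaining issue lies over the discriminant $B \setminus U$.

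The main obstacle is handling singular fibers, in particular those with singularities worse than a single node, which fill $B \setminus U_1$, a locus of codimension $\geq 2$ in $B$. The strategy is a moduli-theoretic dimension count: form the incidence correspondence
$$\mathcal{Z}^{\mathrm{bad}} = \{(X, t) \in \mathcal{M} \times B \colon \mathcal{Y}_t \text{ admits no very good line}\},$$
where $\mathcal{M}$ is the $20$-dimensional moduli space of smooth cubic fourfolds, and decompose it according to the singularity type $\tau$ of $\mathcal{Y}_t$. For each $\tau$ one bounds $\dim \mathcal{Z}^{\mathrm{bad}}_\tau$ by combining (a) the codimension in $B_{\mathrm{univ}}$ of the stratum of cubic threefolds with singularity type $\tau$, which grows with the severity of $\tau$, and (b) for each fixed $\tau$, an exhibition of at least one very good line on a single cubic threefold of type $\tau$ (by explicit geometric construction, or by a deformation argument from a nearby less singular cubic threefold combined with the openness established above applied to the universal family of cubic threefolds rather than to the cubic fourfold $X$). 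Summing these contributions yields $\dim \mathcal{Z}^{\mathrm{bad}} < \dim \mathcal{M}$, so the first projection is not surjective and the conclusion holds for $X$ outside a proper Zariski-closed subset of $\mathcal{M}$. The technical heart of the argument is the explicit exhibition of a very good line for each singularity type that occurs generically in a $5$-parameter family, and the verification that the cuspidal, binodal, and higher strata are sufficiently small that the count closes.
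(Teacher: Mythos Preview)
The paper does not give a proof of this proposition: it is quoted as one of the results of \cite{lazasaccavoisin} (with a reference to \cite{CML} for a weaker statement), and no argument is supplied here. There is therefore no proof in the present paper to compare your proposal against.

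As a strategy, your outline is broadly the right shape---openness of the very-good locus plus a stratified dimension count over the moduli of cubic fourfolds---and it is in the spirit of what is actually done in \cite{lazasaccavoisin}. Two remarks are nonetheless worth making. First, the justification you give for openness of irreducibility (``the number of geometric components is upper semi-continuous'') is not quite the right statement; what one uses is that in a flat proper family the locus of geometrically integral fibers is open (EGA IV), and one has to be slightly careful since the curves $\widetilde{C}_\Delta$ can acquire singularities. Second, and more seriously, the step you label ``the technical heart''---exhibiting a very good line on a cubic threefold of each relevant singularity type---is exactly where the content of the proof in \cite{lazasaccavoisin} lies, and it is long: hyperplane sections of a general cubic fourfold can have non-isolated singularities (e.g.\ the chordal cubic), and each case requires its own analysis. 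Your dimension-count scaffolding also needs care: showing that \emph{one} cubic of type $\tau$ has a very good line only gives the generic member of the $\tau$-stratum, and one must then check the stratum is small enough that a proper closed subset of it cannot dominate $\mathcal{M}$. As written, then, your proposal is a correct outline of the architecture but not a proof.
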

Let
$o_{vg}:\mathcal{F}^{vg}\rightarrow B$ be the family of very good lines in the fibers
of $u:\mathcal{Y}\rightarrow B$. Proposition \ref{prolsv}
says that this morphism is surjective and by definition it is smooth. Restricting over $U_1$, we get a Zariski open set $$\mathcal{F}_1^{vg}\subset \mathcal{F}_1^{0},\,\,o_{vg,1}:\mathcal{F}_1^{vg}\rightarrow U_1$$ with its pulled-back intermediate
Jacobian fibration
$o_{vg,1}^*\mathcal{J}_{U_1}:=\mathcal{F}_1^{vg}\times_{U_1}\mathcal{J}_{U_1}$, which is also isomorphic to $o_{vg,1}^*\mathcal{J}_{U_1}^T$
by Lemma \ref{letrivdroite1}.
In \cite{lazasaccavoisin}, a smooth quasiprojective  compactification
$\pi_{\mathcal{F}^{vg}}:\overline{\mathcal{P}}_{\mathcal{F}^{vg}}\rightarrow \mathcal{F}^{vg}$ of $o_{vg,1}^*\mathcal{J}_{U_1}\rightarrow \mathcal{F}^{vg}_1$
is constructed, with a morphism $\pi_{\mathcal{F}^{vg}}$ which is
flat and projective.
The compactified intermediate Jacobian fibration $\overline{\mathcal{J}}$ is then obtained
by descending the fibration $\pi_{\mathcal{F}^{vg}}$.
The statement which makes it possible is:
\begin{lemm} \label{lerelample}There exists a line bundle $\mathcal{O}(\Theta_1)$ on $\mathcal{J}_{U_1}$ whose pull-back $o_{vg,1}^*\mathcal{O}(\Theta_1)$
to $o_{vg,1}^*\mathcal{J}_{U_1}$ extends uniquely to a relatively ample line
bundle $\mathcal{O}(\Theta^{vg})$ on $\overline{\mathcal{P}}_{\mathcal{F}^{vg}}$.
\end{lemm}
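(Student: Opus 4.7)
The proof strategy will follow the construction in \cite{lazasaccavoisin}, since the statement of Lemma \ref{lerelample} concerns the untwisted fibration $\mathcal{J}_{U_1}$ and is essentially the key descent input used there. In this note the lemma will serve as a stepping stone for the twisted descent, via the isomorphism of Lemma \ref{letrivdroite1}.

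First I would construct $\mathcal{O}(\Theta_1)$ directly on $\mathcal{J}_{U_1}$. Over $U$, the smooth Lagrangian fibration $\pi_U : \mathcal{J}_U \to U$ carries a canonical relative principal polarization, i.e.\ a relative theta divisor $\Theta_U$, well-defined up to translation. As recalled in Section \ref{sec3}, the Mumford compactification $\mathcal{J}_{U_1}\supset \mathcal{J}_{U_1}^\circ$ is designed precisely so that this polarization extends: the identification of the translation element $\eta\in J(\widetilde{\mathcal{Y}}_t)$ with the $\mathbb{C}^*$-bundle class $\eta'\in {\rm Pic}^0 J(\widetilde{\mathcal{Y}}_t)$ via the principal $\Theta$-divisor is exactly the compatibility ensuring that a limit of theta divisors exists over the boundary $U_1\setminus U$. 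Hence $\mathcal{O}(\Theta_1)$ exists on $\mathcal{J}_{U_1}$ and is relatively ample over $U_1$.

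Next I would argue that the pull-back $o_{vg,1}^*\mathcal{O}(\Theta_1)$ extends uniquely to $\overline{\mathcal{P}}_{\mathcal{F}^{vg}}$. Uniqueness follows from a codimension argument: $B\setminus U_1$ has codimension $\geq 2$ in $B$; since $o_{vg}:\mathcal{F}^{vg}\to B$ is smooth and surjective, $\mathcal{F}^{vg}\setminus \mathcal{F}_1^{vg}$ has codimension $\geq 2$ in $\mathcal{F}^{vg}$; the flat projective map $\pi_{\mathcal{F}^{vg}}$ then forces $\overline{\mathcal{P}}_{\mathcal{F}^{vg}}\setminus o_{vg,1}^*\mathcal{J}_{U_1}$ to have codimension $\geq 2$ in the smooth variety $\overline{\mathcal{P}}_{\mathcal{F}^{vg}}$, so that any line bundle extension is forced by the Hartogs principle for reflexive sheaves of rank one.

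For existence and relative ampleness I would invoke the explicit construction in \cite{lazasaccavoisin}: $\overline{\mathcal{P}}_{\mathcal{F}^{vg}}$ is built via the relative compactified Prym variety attached to the universal double cover $\widetilde{\mathcal{C}}_\Delta\to \mathcal{C}_\Delta$ of curves of lines meeting the very good line $\Delta$, and it carries a natural relatively ample theta divisor coming from the principal polarization of the Prym. Mumford's theorem identifying this Prym with the intermediate Jacobian $J(Y)$ as principally polarized abelian varieties then implies that the restriction of the Prym theta line bundle to $o_{vg,1}^*\mathcal{J}_{U_1}$ coincides, up to a translation that can be pinned down using the section of Lemma \ref{letrivdroite1}, with $o_{vg,1}^*\mathcal{O}(\Theta_1)$. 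The main obstacle is controlling the precise matching of polarizations along the boundary corresponding to $1$-nodal cubic threefolds: one must check that the compactified Prym construction produces exactly the Mumford-style gluing of $\mathbb{P}^1$-bundles, so that the two relatively ample extensions agree on the whole of $o_{vg,1}^*\mathcal{J}_{U_1}$ and not merely on its smooth locus; this step is where the detailed analysis of \cite{lazasaccavoisin} is needed.
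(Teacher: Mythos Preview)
Your proposal is correct and matches the paper's treatment: the paper does not give a self-contained proof of this lemma but records it as an input from \cite{lazasaccavoisin}, explaining (exactly as you do) that existence and uniqueness of the extension follow from smoothness of $\overline{\mathcal{P}}_{\mathcal{F}^{vg}}$ together with the codimension~$\geq 2$ property of the complement, and that the substantive point---relative ampleness---is supplied by the compactified Prym construction of \cite{lazasaccavoisin}. One small remark: you separate ``uniqueness'' (codimension) from ``existence'' (Prym construction), whereas the paper notes that both existence and uniqueness of the line bundle extension already follow from the codimension argument on a smooth variety, leaving only relative ampleness as the nontrivial content.
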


Note that the extension of the pull-back $o_{vg,1}^*\mathcal{O}(\Theta_1)$  to a line
bundle on $\overline{\mathcal{P}}_{\mathcal{F}^{vg}}$ exists and is unique because $\overline{\mathcal{P}}_{\mathcal{F}^{vg}}$ is smooth and $o_{vg,1}^*\mathcal{J}_{U_1}\subset \overline{\mathcal{P}}_{\mathcal{F}^{vg}}$ is Zariski open with complement of codimension at least
$2$. This last  point follows from the  flatness of $\pi_{\mathcal{F}^{vg}}$   because
$\mathcal{F}_1^{vg}\subset \mathcal{F}^{vg}$ is Zariski open with
complement of codimension $\geq 2$. The important point in Lemma
\ref{lerelample} is thus relative ampleness. Once we have the line bundle
$\mathcal{O}(\Theta_1)$ on $\mathcal{J}_{U_1}$ as in the lemma, we get
the formula defining $\overline{\mathcal{J}}$ as a Proj over
$B$, namely, letting $j_1:U_1\rightarrow B$ be the natural inclusion,
we set
\begin{eqnarray}\label{eqpourJbar}\overline{\mathcal{J}}=Proj\,(\oplus_k j_{1*}(R^0\pi_{U_1*}\mathcal{O}(k\Theta_1)).
\end{eqnarray}
Using the fact that $B\setminus U_1$ has codimension $2$ in $B$, flatness of $\pi_{\mathcal{F}^{vg}}$
and relative ampleness of (the extension of) the pull-back of $\Theta_1$, we
see that  the sheaf of algebras
$\oplus_k j_{1*}(R^0\pi_{U_1*}\mathcal{O}(k\Theta_1))$ is a sheaf of finitely generated algebras over
$\mathcal{O}_B$ whose graded pieces are locally free and that the Proj is smooth, because
all these properties become true after pull-back to
$\mathcal{F}^{vg}$. Indeed,  the sheaf of algebras $\oplus_k j_{1*}(R^0\pi_{U_1*}\mathcal{O}(k\Theta_1))$ pulls-back to
$\oplus_k R^0\pi_{\mathcal{F}^{vg}*}\mathcal{O}(k\Theta^{vg}))$ on $\mathcal{F}^{vg}$. (To be completely rigorous here, we should in fact replace the line bundle $\Theta_1$ by a multiple.)

The way this descent construction works also makes clear what we need in order to treat the twisted case. Note that what we are going to do is to descend
the same fibration $\pi_{\mathcal{F}^{vg}}:\overline{\mathcal{P}}_{\mathcal{F}^{vg}}\rightarrow \mathcal{F}^{vg}$
to a flat fibration $\pi^T:\overline{\mathcal{J}}^T\rightarrow B$
with a different descent data, given  by a different relatively ample divisor.
More precisely, consider  $\pi_{\mathcal{F}^{vg}}:\overline{\mathcal{P}}_{\mathcal{F}^{vg}}
\rightarrow \mathcal{F}^{vg}$ as a relative flat projective compactification of $o_{vg,1}^*\mathcal{J}^T_{U_1}\rightarrow \mathcal{F}_1^{vg}$ using Lemma
\ref{letrivdroite1}. The only ingredient needed is the following:
\begin{prop} \label{prothetaT} There exists a  line bundle $\mathcal{O}(\Theta^{vg,T})$ on $\overline{\mathcal{P}}_{\mathcal{F}^{vg}}$, which is relatively ample over $\mathcal{F}^{vg}$, and whose restriction to
$\mathcal{F}_1^{vg}$
is the pull-back (using the isomorphism (\ref{eqisodu14})) of a line bundle $\mathcal{O}(\Theta_1^T)$ on
$\mathcal{J}_{U_1}^T$.
\end{prop}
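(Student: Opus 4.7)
The plan is to construct a line bundle $\mathcal{O}(\Theta_1^T)$ on $\mathcal{J}_{U_1}^T$ by descending a suitable multiple of $\mathcal{O}(\Theta_1)$ through the $3$-torsion twist, then to extend its pullback to $\overline{\mathcal{P}}_{\mathcal{F}^{vg}}$ using smoothness of the ambient space together with a codimension-$2$ argument, and finally to check relative ampleness.

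For the construction of $\mathcal{O}(\Theta_1^T)$, recall from Proposition \ref{proptwist}(1) that $\mathcal{J}_{U_1}^T$ is \'etale locally isomorphic to $\mathcal{J}_{U_1}$ with transition functions given by translations by $3$-torsion sections of $\mathcal{J}_{U_1}^\circ$ representing the twisting class $t\in H^1(U_1,\,_3\mathcal{J}_{U_1}^\circ)$ from Lemma \ref{lepourt}. Under the principal polarization $\mathcal{J}_{U_1}^\circ\cong \mathrm{Pic}^0(\mathcal{J}_{U_1}/U_1)$, translation by such an $\eta$ sends $\mathcal{O}(\Theta_1)$ to $\mathcal{O}(\Theta_1)\otimes L_\eta$ with $L_\eta$ of $3$-torsion, so that $\mathcal{O}(3\Theta_1)$ is translation-invariant up to isomorphism and provides a descent datum on $\mathcal{J}_{U_1}^T$ modulo scalar ambiguities. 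The obstruction to making this cocycle-closed sits in $H^2_{\mathrm{et}}(U_1, \mathbb{G}_m)$ via the theta group extension $1\to \mathbb{G}_m \to \mathcal{G}(3\Theta_1)\to \,_3\mathcal{J}_{U_1}^\circ \to 1$. Since $B=(\mathbb{P}^5)^\vee$ has trivial Brauer group and $B\setminus U_1$ has codimension $\geq 2$, purity gives $\mathrm{Br}(U_1)=0$, so this torsion obstruction vanishes and we obtain a line bundle $\mathcal{O}(\Theta_1^T)$ on $\mathcal{J}_{U_1}^T$.

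Pulling back via $o_{vg,1}$ and applying the isomorphism (\ref{eqisodu14}), the line bundle $o_{vg,1}^*\mathcal{O}(\Theta_1^T)$ is identified with $o_{vg,1}^*\mathcal{O}(3\Theta_1)$, possibly up to a translation by a section (see the remark after Lemma \ref{letrivdroite1}), which acts trivially on fibers and hence preserves relative ampleness. Since $\overline{\mathcal{P}}_{\mathcal{F}^{vg}}$ is smooth and the complement of $o_{vg,1}^*\mathcal{J}_{U_1}^T=\pi_{\mathcal{F}^{vg}}^{-1}(\mathcal{F}_1^{vg})$ has codimension $\geq 2$ in $\overline{\mathcal{P}}_{\mathcal{F}^{vg}}$ (by flatness of $\pi_{\mathcal{F}^{vg}}$ and the codimension-$2$ containment $\mathcal{F}^{vg}\setminus \mathcal{F}_1^{vg}\subset\mathcal{F}^{vg}$), the pullback extends uniquely. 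The resulting line bundle $\mathcal{O}(\Theta^{vg,T})$ agrees, up to section-translation, with $\mathcal{O}(3\Theta^{vg})$, which is relatively ample by Lemma \ref{lerelample}; hence so is $\mathcal{O}(\Theta^{vg,T})$.

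The main obstacle is the Brauer-group vanishing underlying the descent of $\mathcal{O}(3\Theta_1)$ to $\mathcal{J}_{U_1}^T$: invoking purity of $\mathrm{Br}$ in codimension $2$ (Gabber's theorem) requires justification, and as a fallback one can instead replace $\mathcal{O}(3\Theta_1)$ by a sufficiently divisible power $\mathcal{O}(3^k\Theta_1)$ to kill the $3$-torsion obstruction explicitly. Once $\mathcal{O}(\Theta_1^T)$ is in hand, the extension and ampleness statements are direct adaptations of the untwisted arguments from \cite{lazasaccavoisin}.
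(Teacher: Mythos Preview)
Your approach differs substantially from the paper's, and there is a genuine gap in the relative-ampleness step.

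The paper does not attempt to descend any specific multiple of $\Theta_1$ through the $3$-torsion twist. Instead it simply takes $\Theta_1^T$ to be \emph{any} relatively ample line bundle on $\mathcal{J}_{U_1}^T$, whose existence is immediate from the projectivity of $\pi_{U_1}^T$ established in Proposition~\ref{proptwist}. The entire content of the proof then lies in showing that the unique extension $\mathcal{O}(\Theta^{vg,T})$ to $\overline{\mathcal{P}}_{\mathcal{F}^{vg}}$ is relatively ample. For this the paper (i) uses a monodromy argument to see that the N\'eron--Severi group of the intermediate Jacobian of a very general cubic threefold is $\mathbb{Z}$, so that $a\Theta^{vg,T}-b\Theta^{vg}$ is topologically trivial on the general fiber for suitable $a,b>0$, and (ii) proves a key specialization lemma (Lemma~\ref{lepournumtri}): if a line bundle on the total space of a flat projective morphism with irreducible fibers is topologically trivial on the general fiber, then it is numerically trivial on every fiber. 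Combining these, $a\Theta^{vg,T}$ is fiberwise the sum of the ample $b\Theta^{vg}$ and something numerically trivial, hence relatively ample. No theta-group or Brauer-group input is needed.

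Your descent construction of $\mathcal{O}(\Theta_1^T)$ may well work (modulo the care you yourself flag about $\mathrm{Br}(U_1)$), but the deduction of relative ampleness is where the argument breaks. You assert that $\mathcal{O}(\Theta^{vg,T})$ ``agrees, up to section-translation, with $\mathcal{O}(3\Theta^{vg})$'' and conclude ampleness from that of $3\Theta^{vg}$. However, the section in question---coming from the ambiguity in the isomorphism~(\ref{eqisodu14}), i.e.\ the Abel--Jacobi image of $3\Delta-h^2$---is a section of $\mathcal{J}^{\circ}$ over $\mathcal{F}_1^{vg}$ only; it is not $3$-torsion and there is no reason for it, or for the induced translation of $\overline{\mathcal{P}}_{\mathcal{F}^{vg}}$, to extend across $\mathcal{F}^{vg}\setminus\mathcal{F}_1^{vg}$. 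Thus over the bad locus you only know that $\mathcal{O}(\Theta^{vg,T})$ is the \emph{unique extension} of a line bundle which, on general fibers, lies in the same N\'eron--Severi class as $3\Theta^{vg}$. Passing from ``numerically $3\Theta$ on the general fiber'' to ``numerically $3\Theta$, hence ample, on every fiber'' is precisely the content of the paper's Lemma~\ref{lepournumtri}, which your argument omits. Relative ampleness is not an open-implies-closed condition on the base, so without a specialization lemma of this kind the conclusion does not follow.

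In short: either replace your descent construction by the paper's soft existence argument, or keep your construction but supply the missing step---namely, prove (via Hodge index on complete-intersection surfaces in the fibers, as in Lemma~\ref{lepournumtri}) that numerical equivalence with $3\Theta^{vg}$ on the generic fiber propagates to all fibers of $\pi_{\mathcal{F}^{vg}}$.
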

Indeed, once one has the line bundles $\mathcal{O}(\Theta^{vg,T})$ and $\mathcal{O}(\Theta_1^T)$ as above, one defines $\overline{\mathcal{J}}^T$ as
\begin{eqnarray}\label{eqpourJbarT}\overline{\mathcal{J}}^T=Proj\,(\oplus_k j_{1*}(R^0\pi_{U_1*}\mathcal{O}(k\Theta_1^T))
\end{eqnarray}
and the same arguments as above show that this is a smooth projective variety, flat over
$B$ and extending $\mathcal{J}^T_{U_1}$.
\begin{proof}[Proof of Proposition \ref{prothetaT}] It is proved in \cite[Section 4]{lazasaccavoisin} that the fibers of $\pi_{\mathcal{F}_{vg}}: \overline{\mathcal{P}}_{\mathcal{F}^{vg}}\rightarrow \mathcal{F}^{vg}$ are irreducible. We now
use the following lemma:
\begin{lemm}\label{lepournumtri} Let $M$ be a smooth irreducible quasiprojective variety, $f:M\rightarrow N$ be a flat  projective morphism with irreducible fibers $M_t$, $\forall t\in N$, and
let $\mathcal{L}\in{\rm Pic}\,M$. If $\mathcal{L}_{\mid M_t}$  is topologically trivial for
the  general point $t\in N$, $\mathcal{L}_{\mid M_t}$  is numerically trivial for all
$t\in N$.
\end{lemm}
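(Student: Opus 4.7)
The plan is to combine Snapper--Kleiman polynomiality of Euler characteristics with Kleiman's numerical Hodge index theorem. First, choose a line bundle $\mathcal{A}$ on $M$ whose restriction to every fiber of $f$ is very ample (available since $f$ is projective) and, for $t\in N$ and integers $a,b$, set
$$\chi(t;a,b):=\chi\bigl(M_t,(\mathcal{L}^a\otimes\mathcal{A}^b)|_{M_t}\bigr).$$
Snapper's theorem says this is a polynomial in $(a,b)$ of total degree $\leq d:=\dim M_t$, whose coefficients compute intersection numbers $(\mathcal{L}^i\cdot\mathcal{A}^{d-i})_{M_t}$ intrinsically defined on the proper scheme $M_t$. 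Flatness of $f$ combined with cohomology and base change makes $\chi(t;a,b)$ locally constant, hence constant ($N$ being connected, as we may assume), in $t$ for each fixed $(a,b)$.

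Next I exploit the hypothesis at the generic point. For $t$ general the fiber $M_t$ is smooth and $c_1(\mathcal{L}|_{M_t})=0$ in $H^2(M_t,\mathbb{Z})$, so Hirzebruch--Riemann--Roch immediately gives $(\mathcal{L}^i\cdot\mathcal{A}^{d-i})_{M_t}=0$ for every $i\geq 1$. Consequently $\chi(t;a,b)$ is independent of the variable $a$ for $t$ general, and by the local constancy of the previous step it is independent of $a$ for every $t\in N$. Reading off the top-degree coefficients in $a$, we obtain, for each $t\in N$,
$$(\mathcal{L}\cdot\mathcal{A}^{d-1})_{M_t}=0\quad\text{and}\quad(\mathcal{L}^2\cdot\mathcal{A}^{d-2})_{M_t}=0.$$

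To conclude, I invoke Kleiman's numerical Hodge index theorem (\emph{Toward a numerical theory of ampleness}, 1966; see also Lazarsfeld, \emph{Positivity in algebraic geometry} I, Theorem~1.6.1): on an irreducible proper scheme with an ample class $H$, any line bundle $D$ satisfying $D\cdot H^{d-1}=0$ and $D^2\cdot H^{d-2}=0$ is numerically trivial. Applied fiber by fiber with $H=\mathcal{A}|_{M_t}$ and $D=\mathcal{L}|_{M_t}$, this delivers the lemma. The main delicacy --- and the reason one prefers Kleiman's formulation to the classical surface Hodge index --- is that the fibers $M_t$ may be singular; Kleiman's theorem is stated in precisely that generality (irreducible proper schemes with an ample class), so singular fibers raise no genuine obstacle. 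The only step that has to be checked carefully in applying this lemma within the paper is that the relevant general fibers are indeed smooth and that the topological triviality assumption on them comes from a global line bundle, both of which hold by construction of $\overline{\mathcal{P}}_{\mathcal{F}^{vg}}$.
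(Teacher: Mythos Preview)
Your proof is correct and shares the paper's two-step skeleton---flatness transports the relevant intersection numbers from the general fiber to every fiber, and then a Hodge index argument forces numerical triviality---but you package both steps differently. Where the paper picks a curve $C\subset M_{t_0}$, places it in a flat family of complete intersection surfaces $S_t\subset M_t$, and transports the two numbers $\mathcal{L}\cdot H\cdot S_t$ and $\mathcal{L}^2\cdot S_t$ along that family, you transport the whole Snapper polynomial $\chi(t;a,b)$ at once via constancy of Euler characteristics. Where the paper then resolves the possibly singular surface $S_{t_0}$ and applies the classical surface Hodge index on $\widetilde{S_{t_0}}$, you invoke Kleiman's higher-dimensional Hodge index directly on the singular fiber $M_{t_0}$. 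Your route is slicker and coordinate-free; the paper's is more elementary in that it only uses the surface Hodge index and an explicit desingularization, at the cost of constructing the auxiliary family of surfaces.

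Two small remarks. First, the citation is slightly imprecise: Lazarsfeld's Theorem~1.6.1 is the Hodge-type \emph{inequality}; the equality clause you need (that $D\cdot H^{d-1}=0$ and $D^2\cdot H^{d-2}=0$ force $D\equiv_{\mathrm{num}}0$ on an irreducible projective scheme) is a corollary, essentially the statement that the form $(D_1,D_2)\mapsto D_1\cdot D_2\cdot H^{d-2}$ has signature $(1,\rho-1)$ on $N^1_{\mathbb R}$. Second, the appeal to Hirzebruch--Riemann--Roch is unnecessary: once $c_1(\mathcal{L}|_{M_t})=0$ in $H^2$, all intersection numbers involving $\mathcal{L}|_{M_t}$ vanish directly, regardless of smoothness of $M_t$; this also makes your use of generic smoothness superfluous.
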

\begin{proof} The statement is local on $N$. Let $t_0\in N$ and let $C\subset M_{t_0}$ be a curve.
Choosing a sufficiently relatively ample line bundle
$H$ on $M$ and using the fact that
$M_{t_0}$ is irreducible, we can assume that $C$ is contained in an irreducible surface $S_{t_0}\subset M_{t_0}$ which is a
complete intersection of members of $|H_{\mid M_{t_0}}|$.
Up to replacing $H$ by a multiple and shrinking $N$ if necessary, we can construct
a  flat family $f_S: S\rightarrow N,\,S\subset M$ of complete intersection surfaces with fiber
$S_{t_0}$ over the point $t_0$. As the line bundle $\mathcal{L}_{\mid M_t}$ is topologically trivial for general $t$, we conclude that
$$c_1(\mathcal{L})\cdot c_1(H)\cdot  S_t=0,\,\,c_1(\mathcal{L})^2\cdot  S_t=0$$
for general $t$, and by flatness, this is also true for $S_{t_0}$.
Let $\tau:\widetilde{S_{t_0}}\rightarrow S_{t_0}$
be a desingularization.
The line bundle
$\mathcal{L}'_0:=\tau^*\mathcal{L}_{\mid S_{t_0}}$ on the smooth connected  surface
$\widetilde{S_{t_0}}$ satisfies
$$c_1(\mathcal{L}'_0)^2=0,\,c_1(\mathcal{L}'_0)\cdot c_1(\tau^*H)=0.$$
As $ c_1(\tau^*H)^2>0$, it follows from the Hodge index theorem that $\mathcal{L}'_0$
is topologically trivial modulo torsion on $\widetilde{S_{t_0}}$. In particular, if $C'\subset
\widetilde{S_{t_0}}$ is a curve mapping onto $C\subset S_{t_0}$, we have
${\rm deg}\,\mathcal{L}'_{0\mid C'}=0= D {\rm deg}\,\mathcal{L}_{\mid C}$, where
$D$ is the degree of $C'$ over $C$. Thus ${\rm deg}\,\mathcal{L}_{\mid C}=0$.
\end{proof}
Let now $\Theta_1^T$ be any relatively ample line bundle on
$\mathcal{J}_{U_1}^T\rightarrow U_1$. Its pull-back to
$\mathcal{J}^T_{\mathcal{F}_1^{vg}}\cong \mathcal{J}_{\mathcal{F}_1^{vg}}$ extends to a line bundle
$\mathcal{O}(\Theta^{vg,T})$ on the compactified family of Prym varieties $\pi_{\mathcal{F}^{vg}}:\overline{\mathcal{P}}_{\mathcal{F}^{vg}}\rightarrow \mathcal{F}^{vg}$.
We  also have on $\mathcal{J}_{\mathcal{F}_1^{vg}}$  the pull-back of the relatively ample line bundle
$\Theta_1$  on
$\mathcal{J}_{U_1}\rightarrow U_1$. The later extends (in fact uniquely) to a relatively
ample line bundle $\mathcal{O}(\Theta^{vg})$ on $\overline{\mathcal{P}}_{\mathcal{F}^{vg}}\rightarrow \mathcal{F}^{vg}$. Next, an easy monodromy argument shows
(see \cite[Section 5]{lazasaccavoisin}) that the N\'eron-Severi group of the intermediate Jacobian of a very general
cubic $3$-fold is isomorphic to
$\mathbb{Z}$. It follows that for adequate positive integers $a,\,b$,
the line bundle
$\mathcal{O}(a\Theta^{vg,T}-b \Theta^{vg})$ is topologically trivial on the general fibers
 of $\pi_{\mathcal{F}^{vg}}$. We then conclude by Lemma
 \ref{lepournumtri} that $\mathcal{O}(a\Theta^{vg,T}-b \Theta^{vg})$ is numerically trivial on all the fibers
 of $\pi_{\mathcal{F}^{vg}}$. Thus $\mathcal{O}(a\Theta^{vg,T})$ is the sum of an ample line bundle and a numerically
 trivial line bundle on any fiber of $\pi_{\mathcal{F}^{vg}}$, hence it is relatively ample.
\end{proof}
This concludes the construction of the smooth  projective variety
$\overline{\mathcal{J}}^T$. The fact that this is a hyper-K\"ahler manifold follows easily, as in
\cite{lazasaccavoisin}:
The existence of the holomorphic $2$-form works as in \cite{lazasaccavoisin}. The fact that it is nondegenerate is due to the fact that it is nondegenerate on $\mathcal{J}^T_{U_1}$ since
$B\setminus U_1$ has codimension $\geq 2$ in $B$ and $\pi:\overline{\mathcal{J}}^T\rightarrow B$ is flat. The fact that the holomorphic $2$-form is nondegenerate on $\mathcal{J}^T_{U_1}$ follows
from the similar statement for the untwisted family since they are \'etale locally isomorphic.
Finally, the fact that the variety we construct is actually hyper-K\"ahler (i.e. simply connected with only one holomorphic $2$-form up to a coefficient) follows from the fact that the two varieties
are birational, hence deformation equivalent, when  the cubic fourfold $X$  acquires an integral Hodge class $\alpha\in H^4(X,\mathbb{Z})$ which has degree $1$ on its hyperplane sections. Note that
the above construction of $\overline{\mathcal{J}}^T$ and $\overline{\mathcal{J}}$ works a priori only for general $X$, but the set of special $X$'s as above is Zariski dense (this is a countable union
of hypersurfaces, dense for the usual topology in the moduli space of cubic fourfolds), hence
there are points which correspond to special $X$'s in Hassett's sense (see \cite{hassett}), with
a special class of degree $1$ along hyperplane sections, and for which the constructions
of $\overline{\mathcal{J}}^T$ and $\overline{\mathcal{J}}$ specialize well.
\begin{rema}{\rm  The two varieties $\overline{\mathcal{J}}^T$ and $\overline{\mathcal{J}}$
are isogenous in the sense that there is a
rational map (of degree $3^{10}$) $\overline{\mathcal{J}}^T\dashrightarrow \overline{\mathcal{J}}$. This is obvious from the fact that the open part $\mathcal{J}^T_U$ is constructed as a torsor over the group scheme $\mathcal{J}^T_U$, with a twisting class
of order $3$. We believe but did not prove that the two varieties are not birational. We only note that by construction  they are  not birational over $B$.}
\end{rema}

 Coll\`ege de France

3  rue d'Ulm, 75005 Paris

 France

\smallskip
 claire.voisin@imj-prg.fr
    \end{document}